\def \C{\mathbb{C}}
\def \Z{\mathbb{Z}}
\def \R{\mathbb{R}}
\def \N{\mathbb{N}}
\def \P{\mathbb{P}}
\def \Q{\mathbb{Q}}
\def \GL{\textup{GL}}
\def \SL{\textup{SL}}
\def \FL{\mathcal{F}\ell}
\def \GZ{\textup{GZ}}
\def \PD{\textup{PD}}
\def \k{{\bf k}}
\theoremstyle{plain}
\newtheorem{Th}{Theorem}[section]
\newtheorem{Lem}[Th]{Lemma}
\newtheorem{Prop}[Th]{Proposition}
\newtheorem{Cor}[Th]{Corollary}
\newtheorem{THM}{Theorem}
\newtheorem{PROP}[THM]{Proposition}
\theoremstyle{definition}
\newtheorem{Ex}[Th]{Example}
\newtheorem{Def}[Th]{Definition}
\newtheorem{Rem}[Th]{Remark}
\def \Pic{\operatorname{Pic}}
\def \Vol{\operatorname{Vol}}
\begin{document}

\title{Cohomology ring of the flag variety vs Chow cohomology ring of the Gelfand-Zetlin toric variety}
\author{Kiumars Kaveh}
\address{Department of Mathematics, University of Pittsburgh,
Pittsburgh, PA, USA}
\email{kaveh@pitt.edu}
\author{Elise Villella}
\address{Department of Mathematics, University of Pittsburgh, Pittsburgh, PA, USA.}
\email{emv23@pitt.edu}

\thanks{The first author is partially supported by a National Science Foundation Grant (Grant ID: DMS-1601303).}
\keywords{Flag variety, toric variety, Gelfand-Zetlin polytope, cohomology ring, Chow cohomology ring, Schubert calculus} 
\subjclass[2010]{Primary: 14M15, 14C15, 14M25}
\date{\today}

\begin{abstract}
We compare the cohomology ring of the flag variety 
$\FL_n$ and the Chow cohomology ring of the Gelfand-Zetlin toric variety $X_\GZ$. We show that $H^*(\FL_n, \Q)$ is the Poincar\'e duality quotient of the subalgebra of $A^*(X_\GZ, \Q)$ generated by degree $1$ elements. We compute these algebras for $n=3$ and see that, in general, this subalgebra does not have Poincar\'e duality. 
\end{abstract}

\maketitle
\tableofcontents


\section*{Introduction}
Throughout the paper, the base field is assumed to be $\C$. The complete flag variety $\FL_n$ is the variety whose points parameterize complete flags of subspaces in $\C^n$, namely:
$$F = (\{0\} \subsetneqq F_1 \subsetneqq \cdots \subsetneqq F_n = \C^n).$$
The variety $\FL_n$ can be identified with the homogeneous space $\GL(n, \C) / B$ where $B$ is the subgroup of upper triangular matrices. The geometry of flag variety plays an important role in representation theory of $\GL(n, \C)$ and combinatorics related to the permutation group. More generally there is a notion of flag variety for any reductive algebraic group $G$. 

We recall that $\dim(\FL_n) = N = n(n-1)/2$. The classes of Schubert varieties form an important $\Z$-basis for $H^*(\FL_n, \Z)$. Since $\FL_n$ has a paving by affine cells (Schubert cells), it has no odd cohomology. Moreover, $H^*(\FL_n, \Z)$ is generated by degree $2$ elements. Also its Chow ring $A^*(\FL_n)$ is isomorphic to $H^*(\FL_n, \Z)$ where the isomorphism doubles the degree. The famous Borel description states that $H^*(\FL_n, \Z)$ is isomorphic to the polynomial algebra in $n$ variables quotient by the ideal generated by non-constant symmetric polynomials. 

We identify the weight lattice $\Lambda = \Lambda_{\GL(n, \C)}$ with the additive group $\Z^n$ and the semigroup of dominant weights $\Lambda^+ = \Lambda^+_{\GL(n, \C)}$ (respectively the positive Weyl chamber $\Lambda^+_\R$) with the collection of all increasing sequences $\lambda = (\lambda_1 \leq \cdots \leq \lambda_n)$ of integers (respectively real numbers). If $\lambda_1 < \cdots < \lambda_n$ we call $\lambda$ a regular dominant weight. We also denote the weight lattice $\Lambda(\SL(n, \C))$ of $\SL(n, \C)$ by $\Lambda'$. It can be identified with the quotient $\Lambda / \Z(1, \ldots, 1)$.

In their fundamental work \cite{Gelfand:1950ihs}, Gelfand and Zetlin\footnote{Warning to the reader: several different spellings of Zetlin's name appear in the English literature such as Tsetlin, Cetlin, Zeitlin or Tzetlin. Following Valentina Kiritchenko we use the spelling Zetlin, justified by the fact that while he was Russian his last name seems to have German origins.}  
construct a certain vector space basis $B_\lambda$ for an irreducible representation $V_\lambda$ of $\GL(n, \C)$ with highest weight $\lambda$, and they explicitly describe the action of $\mathfrak{gl}(n, \C) = \textup{Lie}(\GL(n, \C))$ on basis elements in $B_\lambda$. The Gelfand-Zetlin basis $B_\lambda$ has the remarkable property that its elements are indexed by the lattice points in a convex polytope $\Delta_\lambda \subset \R^N$, where $N=n(n-1)/2$, called the {\it Gelfand-Zetlin polytope} (or GZ polytope) associated to $\lambda$. The defining inequalities of $\Delta_\lambda$ can be explicitly written down. If $\lambda = (\lambda_1 \leq \cdots \leq \lambda_n)$ the polytope $\Delta_\lambda$ is the collection of $(x_{ij} \mid 1 \leq i \leq n-1, 1 \leq j \leq n-i) \in \R^N$ satisfying the following array of inequalities:
\begin{equation}\label{gzarray}
\begin{array}{ccccccccc}
\lambda_1 && \lambda_2 && \lambda_3 && \ldots && \lambda_n\\
&x_{11} && x_{12} && \ldots && x_{1(n-1)} &\\
&& x_{21} && x_{22} && \ldots &&\\
&&& \ddots && \udots &&&\\
&&&& x_{(n-1)1} &&&&
\end{array}
\end{equation}
where each small triangle $\begin{array}{ccc} a&&b\\ &c& \end{array}$ corresponds to the inequalities $a \leq c \leq b$. 
For example if $\lambda = (-1,0,1)$, the Gelfand-Zetlin polytope $\Delta_\lambda$ is given by the inequalities (see Figure \ref{gz1}): $$-1 \leq x \leq 0, \quad 0 \leq y \leq 1, \quad x \leq z \leq y.$$
\begin{figure}
\begin{center}
\includegraphics[scale=.5]{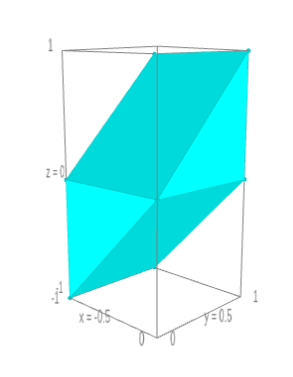}
\caption{Gelfand-Zetlin for $\lambda = (-1,0,1)$}\label{gz1}
\end{center}
\end{figure}

Since there is a one-to-one correspondence between the elements of the Gelfand-Zetlin basis $B_\lambda$ and the lattice points in $\Delta_\lambda$ one immediately sees that:
$$\dim(V_\lambda) = \#(\Delta_\lambda \cap \Z^N).$$
It is well-known that a weight $\lambda$ gives rise to a $\GL(n, \C)$-linearized line bundle $\mathcal{L}_\lambda$ on the flag variety $\FL_n$. When $\lambda$ is regular dominant the line bundle $\mathcal{L}_\lambda$ is very ample. By the Borel-Weil theorem $H^0(\FL_n, \mathcal{L}_\lambda) \cong V_\lambda^*$ as a $\GL(n, \C)$-module. Thus, in particular we have:
$$\dim(H^0(\FL_n, \mathcal{L}_\lambda)) = \#(\Delta_\lambda \cap \Z^N).$$

A general philosophy, suggested in the work of several authors and in particular A. Okounkov \cite{Okounkov-Newton-polytopes}, is that GZ polytopes play a role for the flag variety similar to that of Newton polytopes for toric varieties. In this direction in \cite{kaveh2011note} the first author obtains a description of $H^*(\FL_n, \Q)$ in terms of volumes of GZ polytopes. This description is very similar to the Khovanskii-Pukhlikov description of cohomology ring of a smooth projective toric variety in terms of volumes of Newton polytopes. The description in \cite{kaveh2011note} turns out to be equivalent to the Borel description via a theorem of Kostant (see \cite[Remark 5.4]{kaveh2011note}). Making the connection between geometry of $\FL_n$ and GZ polytopes stronger, in \cite{KST} the authors make a correspondence between Schubert varieties and certain unions of faces of GZ polytopes. They use this correspondence to give applications in Schubert calculus. 

It can be shown that for regular dominant weights $\lambda$, all the polytopes $\Delta_\lambda$ have the same normal fan (Proposition \ref{prop-GZ-fan}). We call this common normal fan the {\it Gelfand-Zetlin fan} and denote it by $\Sigma_\GZ$. It is well-known that, for each regular dominant $\lambda$ the pair $(\FL_n, \mathcal{L}_\lambda)$ can be degenerated, in a flat family with reduced irreducible fibers, to $(X_\GZ, \mathcal{L}_{\Delta_\lambda})$. Here $\mathcal{L}_{\Delta_\lambda}$ is the equivariant line bundle on the toric variety $X_\GZ$ corresponding to the lattice polytope $\Delta_\lambda$ (see \cite{Kogan-Miller}). Such degenerations have been used to study mirror symmetry for the flag variety and partial flag varieties (see \cite{Batyrev}). This motivates the problem of comparing the geometry and topology of $\FL_n$ with that of $X_\GZ$. 

The variety $X_\GZ$ is not smooth and hence its Chow group does not have a ring structure. There is a dual version of the Chow ring, due to Fulton and MacPherson \cite{fulton1981categorical}, that works for singular varieties as well. It is called the {\it operational Chow ring} or simply {\it Chow cohomology ring}. For a variety $X$ we denote its Chow cohomology ring by $A^*(X)$. 

Let ${\bf k}$ be a field. Given a graded algebra $A=\bigoplus_{i=0}^n A^i$ with $A^0 \cong A^n \cong {\bf k}$, one can form the largest quotient $A/I$ of $A$ such that $A/I$ has Poincar\'e duality (Lemma \ref{pd-lemma}). We call this the {\it Poincar\'e duality quotient of $A$} and denote it by $\PD(A)$. The main result of the paper is the following (Theorem \ref{th-main}):

\begin{THM}   \label{th-intro-main}
The cohomology ring $H^*(\FL_n, \Q)$ is isomorphic to the Poincar\'e duality quotient of the subalgebra of $A^*(X_\GZ, \Q)$ generated by degree $1$ elements.  
\end{THM}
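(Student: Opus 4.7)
The plan is to construct a surjective graded ring map $\phi\colon L \twoheadrightarrow H^*(\FL_n, \Q)$ doubling degrees, and then invoke the defining property of the Gorenstein quotient together with Poincar\'e duality to deduce that $\phi$ descends to an isomorphism $\bar\phi\colon \Gor(L) \to H^*(\FL_n, \Q)$.

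The main bridge is the flat degeneration $(\FL_n, \mathcal{L}_\lambda) \rightsquigarrow (X_\GZ, \mathcal{L}_{\Delta_\lambda})$, which yields the matching of top intersection numbers
\[
c_1(\mathcal{L}_{\lambda_1}) \cdots c_1(\mathcal{L}_{\lambda_N})\,[\FL_n] \;=\; [\mathcal{L}_{\Delta_{\lambda_1}}] \cdots [\mathcal{L}_{\Delta_{\lambda_N}}]\,[X_\GZ]
\]
for any $\lambda_1, \ldots, \lambda_N$ (both sides equal $N!$ times the mixed volume of the GZ polytopes). Using the Minkowski sum identity $\Delta_{\lambda + \mu} = \Delta_\lambda + \Delta_\mu$, the assignment $\lambda \mapsto [\mathcal{L}_{\Delta_\lambda}]$ is $\Q$-linear with image a subspace $L_0^1 \subseteq L^1$; let $L_0 \subseteq L$ denote the graded subalgebra it generates. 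Sending $[\mathcal{L}_{\Delta_\lambda}] \mapsto c_1(\mathcal{L}_\lambda)$ defines a ring map $\phi_0\colon L_0 \to H^*(\FL_n, \Q)$, whose well-definedness follows from the Pukhlikov--Khovanskii type construction of \cite{kaveh2011note}: this identifies $H^*(\FL_n, \Q)$ as the quotient of the polynomial algebra on $L_0^1$ by the annihilator of the Gelfand--Zetlin volume polynomial, and the same description yields $\Gor(L_0) \cong H^*(\FL_n, \Q)$. To extend $\phi_0$ to all of $L$, pick a complement $V$ to $L_0^1$ in $L^1$ and send $V$ to zero. The key claim is that every element of $V$ lies in the Gorenstein ideal $I$ of $L$, which gives $L = L_0 + I$ and therefore $\Gor(L) \cong \Gor(L_0) \cong H^*(\FL_n, \Q)$.

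With $\phi$ in place, surjectivity follows from the Borel generation of $H^*(\FL_n, \Q)$ in degree two, and $\phi$ is an isomorphism on top degree (both sides $\Q$; matched by $c_1(\mathcal{L}_\lambda)^N[\FL_n] = [\mathcal{L}_{\Delta_\lambda}]^N[X_\GZ] \neq 0$ for regular $\lambda$). Since $H^*(\FL_n, \Q)$ has Poincar\'e duality, the defining property of the Gorenstein quotient (Lemma \ref{gorenstein}) implies $\phi$ factors uniquely through $\Gor(L)$, giving a surjection $\bar\phi\colon \Gor(L) \twoheadrightarrow H^*(\FL_n, \Q)$. Injectivity of $\bar\phi$ follows from the standard Poincar\'e duality argument: if $\bar\phi(a) = 0$, then $\bar\phi(ab) = 0$ for every $b$, so $ab = 0$ in the top degree of $\Gor(L)$ via the top-degree isomorphism, and Poincar\'e duality of $\Gor(L)$ forces $a = 0$.

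The hardest step is the claim that every class in $V$ lies in the Gorenstein ideal of $L$. Since $V$ consists of degree 1 Minkowski weights on rays of $\Sigma_\GZ$ outside the line-bundle image $L_0^1$, one must show that their intersection against any product of $N-1$ classes in $L^1$ vanishes in $L^N = \Q$. This requires a careful analysis of Minkowski weight multiplication on $\Sigma_\GZ$ in the sense of Fulton--Sturmfels, together with its compatibility with the Gelfand--Zetlin mixed-volume formula supplied by the degeneration; in effect, one must verify that the top-degree pairing on $L$ factors through the line-bundle subalgebra $L_0$ up to the Gorenstein ideal.
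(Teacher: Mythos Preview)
Your argument has a genuine gap: the ``hardest step'' is not proved, only described. You acknowledge this yourself --- you say one must carry out a careful analysis of Minkowski weight multiplication on $\Sigma_\GZ$ to verify that every element of the complement $V \subset L^1$ lies in the Gorenstein ideal, but you do not perform that analysis. Without it the proof is incomplete, because the entire conclusion $\Gor(L)\cong\Gor(L_0)$ rests on that claim.

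The paper's proof dissolves this difficulty rather than confronting it: it shows that $V=0$, so there is nothing to check. The key input is Proposition~\ref{normalfan} (due to Kiritchenko), which asserts that every lattice polytope with normal fan $\Sigma_\GZ$ is a translate of some $\Delta_\lambda$. Via the identification $A^1(X_\GZ)\cong\textup{Pic}(X_\GZ)\cong\mathcal{P}(\Sigma_\GZ)/\Z^N$, this yields (Corollary~\ref{cor-additive}) that the map $\Lambda'\to A^1(X_\GZ)$, $\lambda\mapsto[\mathcal{L}_{\Delta_\lambda}]$, is an \emph{isomorphism}, not merely an injection. In your language, $L_0^1=L^1$ already. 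Once that is known, the matching of top self-intersection numbers (Propositions~\ref{degreeToric} and~\ref{flagDegree}) together with the algebraic Lemma~\ref{algLemma} finishes the argument immediately. The paper never needs to construct an explicit ring map $L\to H^*(\FL_n,\Q)$ or to worry about whether such a map is well defined on $L_0$; Lemma~\ref{algebraKaveh} reconstructs both Poincar\'e duality algebras from the same volume polynomial and hence identifies them.

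A secondary issue: your assertion that $[\mathcal{L}_{\Delta_\lambda}]\mapsto c_1(\mathcal{L}_\lambda)$ extends to a ring homomorphism $\phi_0\colon L_0\to H^*(\FL_n,\Q)$ is not justified. The reference to \cite{kaveh2011note} gives a presentation of $H^*(\FL_n,\Q)$ as a quotient of $\textup{Sym}(\Lambda'_\Q)$, but it does not tell you that the relations holding in $L_0\subset A^*(X_\GZ,\Q)$ map to zero in $H^*(\FL_n,\Q)$. The paper sidesteps this as well by working symmetrically through Lemma~\ref{algebraKaveh} rather than building a map in one direction.
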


One key combinatorial ingredient in the proof is the following statement suggested to us by  Valentina Kiritchenko (Proposition \ref{normalfan}):
\begin{PROP}
Let $P$ be a polytope whose normal fan is $\Sigma_{GZ}$, then $P = c + \Delta_\lambda$ for some $\lambda \in \Lambda^+$ and $c \in \R^N$.
\end{PROP}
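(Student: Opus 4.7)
The plan is to describe $P$ by its facet-defining inequalities along the rays of $\Sigma_{GZ}$ and then to solve for $c$ and $\lambda$ with $P=c+\Delta_\lambda$ as a linear system. Fix any regular dominant $\lambda_0 \in \Lambda^+_\R$; the rays of $\Sigma_{GZ}$ are the primitive inner normals of the facets of $\Delta_{\lambda_0}$, falling into four families dictated by the GZ array \eqref{gzarray}: $-e_{1,i}$ and $e_{1,i}$ (from the top-row inequalities) and $e_{k,i}-e_{k+1,i}$ and $e_{k+1,i}-e_{k,i+1}$ (from the triangle inequalities). Since $P$ has the same normal fan, its facets point in exactly these directions, so
$$
P = \{x\in\R^N : -x_{1,i}\leq s_i,\ x_{1,i}\leq t_i,\ x_{k,i}-x_{k+1,i}\leq u_{k,i},\ x_{k+1,i}-x_{k,i+1}\leq v_{k,i}\}
$$
for some real numbers $s_i, t_i, u_{k,i}, v_{k,i}$. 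The condition $P=c+\Delta_\lambda$ then becomes the linear system
\begin{align*}
s_i &= -\lambda_i - c_{1,i}, & t_i &= \lambda_{i+1} + c_{1,i},\\
u_{k,i} &= c_{k,i}-c_{k+1,i}, & v_{k,i} &= c_{k+1,i}-c_{k,i+1}
\end{align*}
in the unknowns $c=(c_{k,i})\in\R^N$ and $\lambda=(\lambda_1,\ldots,\lambda_n)\in\R^n$.

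I would first solve for $c$ using only the $u,v$-equations. After fixing $c_{1,1}$ (this single free parameter matches the overall redundancy $(c,\lambda)\mapsto(c+t\mathbf{1},\lambda-t\mathbf{1})$ that leaves $P$ unchanged), every other $c_{k,i}$ is determined by telescoping along any path from $(1,1)$ to $(k,i)$ in the triangular index graph. The $s,t$-equations then read off $\lambda_j$. The core task is to show that this assignment is well-defined: different paths in the $c$-graph must give the same answer, and the two expressions for each interior $\lambda_j$ (from $s_j$ and from $t_{j-1}$) must agree. These consistency conditions are forced by the hypothesis on the normal fan: for $n\geq 3$ the GZ polytope is non-simplicial, and at each vertex of $P$ where more than $N$ facets meet, the simultaneous satisfaction of all incident facet equalities imposes a linear relation on $(s,t,u,v)$. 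The prototypical such relation is $t_i+s_{i+1}=u_{1,i}+v_{1,i}$, coming from the four-facet vertex of $P$ analogous to the point $(\lambda_2,\lambda_2,\lambda_2)$ in the $n=3$ GZ polytope; deeper relations arise at four-facet vertices further inside the array.

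The main obstacle is to verify systematically that these non-simplicial vertex relations span precisely the compatibility conditions of our linear system, equivalently that the parameterization $(c,\lambda)\mapsto(s,t,u,v)$ has image cut out exactly by the vertex relations. A clean route is by induction on $n$ via the fibration structure of the GZ polytope: the projection of $P$ onto the top-row coordinates $(x_{1,1},\ldots,x_{1,n-1})$ should be a rectangular box, and each fiber should be a translate of an $(n-1)$-dimensional polytope with normal fan $\Sigma_{GZ}$ for $n-1$, to which the inductive hypothesis applies. Once the linear system is solved, the dominance condition $\lambda_j\leq\lambda_{j+1}$ is automatic: since $P$ is non-empty, the slab $-s_j\leq x_{1,j}\leq t_j$ forces $-s_j\leq t_j$, and the identity $s_j+t_j=\lambda_{j+1}-\lambda_j$ gives $\lambda_j\leq\lambda_{j+1}$.
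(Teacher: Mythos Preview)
Your proposal is correct and follows essentially the same route as the paper. Both arguments begin by writing $P$ via facet inequalities parallel to the GZ facets and then solve for $(c,\lambda)$; the consistency relations you locate at the non-simplicial four-facet vertices are precisely what the paper calls the ``diamond relation'': in a diamond $\begin{smallmatrix}&a&\\b&&c\\&d&\end{smallmatrix}$ of the GZ array, the equalities $b=a$ and $c=a$ force $d=a$, and transporting this to $P$ yields (in the paper's notation) $a'_{ij}=b'_{ij}$, which is exactly your path-independence condition for $c$. The paper carries this out constructively row by row rather than phrasing it as a global linear system, but the substance is identical. Your suggested alternative via the fibration onto the top row and induction on $n$ is not the route the paper takes; the paper's direct diamond argument is shorter, while your fibration viewpoint would make the inductive structure more transparent and generalize more readily.
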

 
Another ingredient in the proof of Theorem \ref{th-intro-main} is an algebra lemma which states that a Poincar\'e duality algebra $A = \bigoplus_{i=0}^n A^i$ that is finite dimensional as a vector space and is generated (over $A^0$) by $A^1$, is uniquely determined by its top product polynomial $p: A^1 \to A^n \cong A^0$, $p(x) = x^n$ (Theorem \ref{algebraKaveh}).

In \cite{fulton1997intersection} it is shown that the Chow cohomology ring of a toric variety $X_\Sigma$ is naturally isomorphic to the ring of {\it Minkowski weights} on its fan $\Sigma$. A degree $k$ Minkowski weight on a fan $\Sigma$ is an assignment of integers to $k$-dimensional cones in $\Sigma$ which satisfies certain balancing condition. One defines a product of Minkowski weights that makes the collection of all Minkowski weights into a ring (see Section \ref{sec-MW}, see also \cite{fulton1997intersection, Kazarnovskii}). 
There is also an alternative description of the Chow cohomology ring of a toric variety in terms of piecewise linear functions on its fan (see \cite{Payne}). 

In Section \ref{sec-n=3}, we use the Minkowski weights description of the Chow cohomology ring, to compute $A^*(X_\GZ, \Q)$ for $n=3$ and see directly that it coincides with its subalgebra generated by degree $1$ elements. We also see $A^*(X_\GZ, \Q)$ does not have Poincar\'e duality. 

The second author has written a Sage code that verifies that for $n = 4, 5$ the Chow cohomology ring of $X_\GZ$ is not generated in degree $1$, and moreover the subalgebra generated in degree $1$ does not have Poincar\'e duality. See {\sf https://github.com/evillella/minkowski}. Also see the appendix in the second author's Ph.D. thesis \cite{EliseThesis}.

In geometric terms, the isomorphism between the Picard groups of $\FL_n$ and $X_\GZ$ can be constructed by means of a \emph{toric degeneration}. A toric degeneration of $\FL_n$ to $X_\GZ$ is a flat family $\pi: \mathcal{X} \to \C$ with reduced fibers and an action of $\C^*$ lifting the $\C^*$ action on the base $\C$ such that the general fiber $X_t := \pi^{-1}(t)$, $t \neq 0$, is $\FL_n$ and its unique special fiber $X_0 := \pi^{-1}(0)$ is $X_\GZ$. Then any divisor class $[D]$ in $\Pic(\FL_n)$ can be extended to the whole family $\mathcal{X}$ and then specialized to the special fiber $X_\GZ$ to get a divisor class $[D_0]$ on $X_\GZ$. For a general toric degeneration, $[D_0]$ may not be a Cartier divisor class. But one shows that this is the case for example for the family constructed in \cite{Kogan-Miller} (see \cite[Proposition 11]{Kogan-Miller}). In fact, under this specialization map the class of a line bundle $\mathcal{L}_\lambda$ on $\FL_n$ goes to the class of the line bundle on $X_\GZ$ determined by the polytope $\Delta_\lambda$. We do not know if this construction extends to give a homomorphism between the Chow cohomology rings.\\ 

\noindent{\bf Acknowledgements}
We thank Valentina Kiritchenko and June Huh for very helpful email correspondence and conversations. We are also in debt to Kyeong-Dong Park for proof reading a first version of the paper and giving useful comments.

\section{Some facts about Gelfand-Zetlin polytopes}
In this section we prove some basic facts about GZ polytopes. We start with the normal fan to a GZ polytope $\Delta_\lambda$. 
Recall that the normal fan $\Sigma_\Delta$ of a polytope $\Delta$ is constructed as follows: for each face $F$ let $C_F$ be the face cone of $F$ and let $\sigma_F$ be the dual cone to $C_F$. Then $\Sigma_\Delta = \{\sigma_F \mid F \textup{ face of } \Delta\}$ (see \cite[Section 2.3]{Cox-Little-Schenck}).

\begin{Prop}   \label{prop-GZ-fan}
For a regular dominant weight $\lambda$, the normal fan $\Sigma_{\lambda}$ of $\Delta_\lambda$ is independent of $\lambda$.
\end{Prop}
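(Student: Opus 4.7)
The plan is to reduce the independence of $\Sigma_\lambda$ to a combinatorial fact about vertices. Writing the inequalities defining $\Delta_\lambda$ in the form $Ax \le b(\lambda)$, one observes that the rows of $A$ (the facet normals) are either $\pm e_{1j}$ for the top-row constraints or of the form $\pm(e_{ij}-e_{i-1,j'})$ for the internal ones, and so the matrix $A$ is independent of $\lambda$; only the offsets $b(\lambda)$ vary, and they depend linearly on $\lambda$. Since the normal cone at a vertex $v$ of $\Delta_\lambda$ is generated by the outer normals of the facets containing $v$, the normal fan is determined by $A$ together with the combinatorial data of which subsets of rows of $A$ are tight at each vertex. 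It therefore suffices to show that, for regular dominant $\lambda$, this tight-set data is the same across choices of $\lambda$.

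To this end I would parametrize vertices combinatorially. The key claim is that if $v$ is a vertex of $\Delta_\lambda$, then each coordinate $v_{ij}$ equals some $\lambda_{\nu(i,j)}$, where the assignment $\nu$ satisfies $\nu(i,j)\in\{\nu(i-1,j),\nu(i-1,j+1)\}$ (with the convention $\nu(0,k)=k$). I would prove this by induction on the row $i$: for the top row, the inequalities $\lambda_j\le x_{1j}\le \lambda_{j+1}$ already force $x_{1j}\in\{\lambda_j,\lambda_{j+1}\}$ at a vertex (using regularity to keep the two bounds distinct), and a similar argument in each subsequent row shows $x_{ij}$ must equal one of its two upper neighbors. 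Regularity then guarantees both that the map $\nu\mapsto v_\nu$ is well-defined and injective onto the vertex set, and that the admissible patterns $\nu$ are cut out by purely combinatorial conditions on the indices, independent of the numerical values of the $\lambda_k$.

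From there the conclusion follows quickly: the set of inequalities tight at $v_\nu$ is determined by $\nu$ alone, so the normal cone $\sigma_\nu$ depends only on $\nu$ and on the fixed matrix $A$. Hence the collection of maximal cones of $\Sigma_\lambda$ and their indexing by $\nu$ coincide for any two regular dominant weights, and $\Sigma_\lambda=\Sigma_{\lambda'}$. The main obstacle I anticipate is the inductive propagation step: a priori some $v_{ij}$ could be pinned to an intermediate value by a long chain of tight inequalities involving many other coordinates, and one must use the triangular interlocking of the GZ inequality system together with regularity of $\lambda$ to rule this out. Once this structural statement is in place, the rest of the argument is routine book-keeping.
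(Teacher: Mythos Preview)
Your approach is correct but takes a different route from the paper's. The paper argues face-by-face: each face $F$ of $\Delta_\lambda$ is cut out by turning a subset of the GZ inequalities into equalities, and since those equalities are either of the form $x_{1i}=\lambda_j$ (which only translates the face as $\lambda$ varies) or $x_{ij}=x_{(i-1)k}$ (entirely $\lambda$-free), the face cone $C_F$ and hence its dual $\sigma_F$ do not depend on $\lambda$. This is a short, top-down argument that treats all faces uniformly and never needs to enumerate vertices.

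Your argument is vertex-based: you parametrize the vertices by the combinatorial patterns $\nu$ and show that both the set of admissible $\nu$ and the tight-set at each $v_\nu$ are $\lambda$-independent. This is more work but yields more --- namely an explicit description of the vertices of $\Delta_\lambda$, which the paper's proof does not provide. The propagation step you flag as the obstacle (that at a vertex each $v_{ij}$ must equal one of its two parents) is the genuine content of your argument and does require an argument; it follows from the observation that the tight constraints at a vertex partition the coordinates into equality classes, each of which must be anchored to some $\lambda_k$ (else it can be perturbed), together with the interlacing $v_{ij}\le v_{(i-1)(j+1)}\le v_{i(j+1)}$, which forces any two equal entries in the same row to squeeze the intermediate parent. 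Once that is established, the rest is indeed routine. One small point to tidy: your claim that $\nu\mapsto v_\nu$ is injective is immediate from regularity, but you should also note (it is easy) that every such $\nu$ actually gives a vertex, since the $N$ parent-equalities already pin down all coordinates.
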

\begin{proof} 
The facets of $\Delta_\lambda$ correspond to single equalities in the array \eqref{gzarray}, and lower dimensional faces of $\Delta_\lambda$ correspond to multiple equalities in the array. There are two types of equality that can occur, (i) those of the form $x_{1i} = \lambda_j$ and (ii) those of the form $x_{ij} = x_{(i-1)k}$. The second type of equality is clearly independent of $\lambda$ and the first type depends on $\lambda$, so that the faces corresponding to various $\lambda$ values differ only by translation. It follows that for a face $F$, which is defined by a combination of these two types of equalities, the corresponding face cone $C_F$ and hence its dual cone $\sigma_F$ is independent of $\lambda$. This proves the claim.  
\end{proof}

\begin{Def}[Gelfand-Zetlin fan]   \label{def-GZ-fan}
We refer to the common normal fan of the $\Delta_\lambda$, where $\lambda$ is regular dominant, as the {\it Gelfand-Zetlin fan} and denote it by $\Sigma_\GZ$.
\end{Def}

\begin{Prop}\label{normalfan}
Let $P$ be a polytope whose normal fan is $\Sigma_{GZ}$, then $P = c + \Delta_\lambda$ for some $\lambda \in \Lambda^+$ and $c \in \R^N$. Moreover, if $P$ is a lattice polytope then $\lambda$ is a dominant weight and $c \in \Z^N$.
\end{Prop}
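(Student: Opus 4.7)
The plan is to recover $c$ and $\lambda$ directly from the support function of $P$ by solving an explicit linear system.

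By Proposition \ref{prop-GZ-fan} the rays of $\Sigma_{GZ}$ correspond to the $2N$ facets of $\Delta_\lambda$ and split into two groups of primitive generators: the type (i) generators $\pm e_{1,j}$ ($j=1,\ldots,n-1$) coming from the $\lambda$-dependent inequalities $\lambda_j \leq x_{1,j} \leq \lambda_{j+1}$, and the type (ii) generators $e_{i-1,j}-e_{i,j}$ and $e_{i,j}-e_{i-1,j+1}$ (for $i\geq 2$) coming from the interlacing $x_{i-1,j} \leq x_{i,j} \leq x_{i-1,j+1}$. A direct computation gives $h_{\Delta_\lambda}$ equal to $-\lambda_j$, $\lambda_{j+1}$, $0$, $0$ on these four classes of generators. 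Since $h_{c+\Delta_\lambda}(u)=\langle c,u\rangle+h_{\Delta_\lambda}(u)$, the desired identity $P=c+\Delta_\lambda$ is equivalent to the $2N$-equation linear system
\begin{align*}
h_P(-e_{1,j}) &= -c_{1,j}-\lambda_j, &
h_P(e_{1,j}) &= c_{1,j}+\lambda_{j+1}, \\
h_P(e_{i-1,j}-e_{i,j}) &= c_{i-1,j}-c_{i,j}, &
h_P(e_{i,j}-e_{i-1,j+1}) &= c_{i,j}-c_{i-1,j+1}
\end{align*}
in the $N+n$ unknowns $(c,\lambda)$.

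I would construct a candidate solution recursively. After fixing $c_{1,1}=0$ to break the one-dimensional translation ambiguity, the first family of type (i) equations determines $\lambda_j$ and $c_{1,j}$ in sequence, and the first family of type (ii) equations then defines $c_{i,j}$ for $i\geq 2$ by $c_{i,j} := c_{i-1,j}-h_P(e_{i-1,j}-e_{i,j})$. A short check verifies that the kernel of the resulting linear map $(c,\lambda)\mapsto (h_{c+\Delta_\lambda}(u_\rho))_\rho$ is one-dimensional, spanned by $c=t(1,\ldots,1)$, $\lambda=-t(1,\ldots,1)$, so its image has dimension $N+n-1$ and the solution (if it exists) is unique modulo this diagonal translation.

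The crux is to check that the remaining $N-n+1$ equations of the system (the second family of type (ii) equations and the second family of type (i) equations) are automatically satisfied whenever $P$ has normal fan $\Sigma_{GZ}$. Each such equation has the form $\sum_\ell \alpha_\ell h_P(u_\ell)=0$, corresponding to some linear dependence $\sum_\ell \alpha_\ell u_\ell = 0$ among primitive ray generators of $\Sigma_{GZ}$. The key observation is that the facets of $\Delta_\lambda$ corresponding to those $u_\ell$ always share a common face: for example, the four normals $e_{1,j}$, $-e_{1,j+1}$, $e_{1,j}-e_{2,j}$, $e_{2,j}-e_{1,j+1}$ all cut out facets containing the face of $\Delta_\lambda$ defined by $x_{1,j}=x_{1,j+1}=x_{2,j}=\lambda_{j+1}$. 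Since $P$ has the same normal fan, the corresponding facets of $P$ also share a common face, and evaluating $\langle\cdot,v\rangle$ at any point $v$ of that face forces the desired dual relation on support function values. The main obstacle will be the combinatorial bookkeeping: for general $n$ one must systematically identify, for each of the $N-n+1$ consistency equations, a face of $\Delta_\lambda$ along which the relevant facets meet, and verify independence. The $n=3$ case already exhibits this pattern with the unique non-simple vertex of $\Delta_\lambda$.

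For the moreover statement, the recursion expresses $c$ and $\lambda$ as integer-linear combinations of the values $h_P(u_\rho)$ at primitive lattice ray generators. When $P$ is a lattice polytope these values are integers, so choosing the free diagonal parameter $t$ to be an integer yields $c\in\Z^N$ and $\lambda\in\Lambda^+$.
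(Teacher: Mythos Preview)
Your approach is essentially the paper's, recast in the dual language of support functions: your recursive solution of the linear system is exactly the paper's row-by-row construction of $\lambda$ and $c$ from the facet data, and your ``common face'' argument for the consistency equations is precisely the paper's ``diamond relation'' (that $a=b=c$ forces $d=a$ in each small diamond of the GZ array, which is the statement that the four adjacent facet normals meet along a face). The paper carries out the inductive bookkeeping row by row rather than isolating the linear system, but the key mechanism is identical; one small slip in your write-up is that all $2(n-1)$ type~(i) equations are used in determining $(\lambda,c_{1,\bullet})$, so the $N-n+1$ consistency checks come entirely from the second family of type~(ii) equations.
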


\begin{proof}
Since normal fan of $P$ is $\Sigma_{GZ}$, the hyperplanes defining $P$ are parallel to the ones defining $\Delta_\lambda$, for any dominant regular $\lambda$ (as we have already showed the fan is independent of $\lambda$). 
Let us use $y_{ij}$ (respectively $x_{ij}$) for coordinates of a point in $P$ (respectively a GZ polytope $\Delta_\lambda$). 
Recall that there are two types of inequalities defining $\Delta_\lambda$ namely, $\lambda_{i} \leq x_{1i} \leq \lambda_{i+1}$ and $x_{(i-1)j} \leq x_{ij} \leq x_{(i-1)(j+1)}$. 
Since the facets of $P$ are parallel to those of a GZ polytope we conclude that the inequalities defining $P$ come in two types as well: 
\begin{align}\label{P1}
    a_j &\leq y_{1j} \leq b_j \quad &1 \leq j \leq n-1 \\
    y_{(i-1)j} + a_{ij} & \leq y_{ij} \leq y_{(i-1)(j+1)} + b_{ij} \quad &2 \leq i \leq n-1, \;  1 \leq j \leq n-i.\nonumber
\end{align}
We wish to find $\lambda = (\lambda_1 \leq \cdots \leq \lambda_n)$ and $c = (c_{ij}) \in \R^N$ such that if $x_{ij} = y_{ij} + c_{ij}$ then the inequalities \eqref{P1} for the $y_{ij}$ are equivalent to the GZ inequalities \eqref{gzarray} for the $x_{ij}$.

The first type of inequalities $a_j \leq y_{1j} \leq b_j$ tell us what $\lambda$ to choose. Set $\lambda_1 = a_1$ and $\lambda_2 = b_1$. 
By induction suppose for $1 \leq j < n-1$ we have picked $\lambda_1,\ldots, \lambda_{j+1}$ and $c_{11} = 0, c_{12}, \ldots, c_{1j}$ such that:
$$\lambda_1 \leq x_{11} = y_{11} \leq \lambda_2 \leq x_{12} \leq \ldots \leq x_{1j} \leq \lambda_{j+1},$$ where $x_{1k} = y_{1k} + c_{1k}$, for all $k$. Now if we put $c_{1(j+1)} = \lambda_{j+1}-a_{j+1}$ and $\lambda_{j+2} = b_{j+1}+\lambda_{j+1}-a_{j+1}$ we have $\lambda_{j+1} \leq x_{1(j+1)} \leq \lambda_{j+2}$ as required.

For the remaining rows, we first need to examine the small diamonds $\begin{array}{ccc} &a& \\b&&c\\ &d& \end{array}$ appearing in the GZ array \eqref{gzarray}. Since $b \leq d \leq c$, the equalities $b=a$ and $c=a$ imply $d=a$. This gives us linear relations among the ray generators in the fan $\Sigma_\GZ$ which in turn translate to relations among the $a_{ij}$, $b_{ij}$ for the polytope $P$. Let $1 < i < n-1$ and $1 \leq j \leq n-i$ and by induction suppose we have picked $c_{11}, \ldots, c_{i(j-1)}$ so that $x_{11}, \ldots, x_{i(j-1)}$ satisfy the GZ triangular array of inequalities. We would like to find $c_{ij}$ so that $x_{ij} = y_{ij} + c_{ij}$ satisfies the diamond $$\begin{array}{ccc} &x_{(i-2)(j+1)}& \\x_{(i-1)j}&&x_{(i-1)(j+1)}\\ &x_{ij}& \end{array}$$
The second type of inequalities in \eqref{P1} can be written as:
\begin{equation}\label{relation}
x_{(i-1)j}+a'_{ij} \leq y_{ij} \leq x_{(i-1)(j+1)} + b'_{ij}
\end{equation}
where $a'_{ij} = a_{ij} + c_{(i-1)j}$ and $b'_{ij} = b_{ij} + c_{(i-1)(j+1)}$.
Now when we consider the face of $P$ where $x_{(i-1)j} = x_{(i-2)(j+1)}$ and $x_{(i-1)(j+1)}=x_{(i-2)(j+1)}$, by what we said above, the inequality \eqref{relation} becomes two equalities. We thus have: 
$$x_{(i-2)(j+1)} + a'_{ij} = x_{(i-2)(j+1)} + b'_{ij}$$ which implies $a'_{ij} = b'_{ij}$. Now, if we define $x_{ij} = y_{ij} - a'_{ij}$, i.e. $c_{ij}=-a'_{ij}$, the relation \eqref{relation} becomes $$x_{(i-1)j} \leq x_{ij} \leq x_{(i-1)(j+1)}$$ as required. 
Therefore $P = c + \Delta_\lambda$ where $\lambda = (\lambda_1, \ldots, \lambda_n)$ and $c = (c_{ij})$ as constructed above. Finally, if $P$ is a lattice polytope then the $a_i$, $b_i$, $a_{ij}$, $b_{ij}$ should be integers (note that none of the inequalities in \eqref{P1} is redundant and the corresponding equality defines a facet of $P$). This implies that $\lambda$ and $c$ are integer vectors as well.  
\end{proof}

\begin{Rem}   \label{rem-Kiritchenko}
Proposition \ref{normalfan} was suggested to us by Valentina Kiritchenko. The proof presented above is due to the second author.
\end{Rem}

\begin{Rem}
Observe that there are $n+ n(n-1)/2$ parameters present in $c + \Delta_\lambda$, but a GZ polytope is cut out by $n(n-1)$ facets, one for each ray in $\Sigma_\GZ(1)$. The dimension of the space of polytopes with normal fan $\Sigma_{GZ}$ is hence much smaller than the number of rays in the fan due to the fact that $\Delta_\lambda$ is not a simple polytope, or equivalently, the fan $\Sigma_{GZ}$ is not simplicial.
\end{Rem}

A third useful property of the GZ polytopes is that they behave well with respect to Minkowski addition. We recall that for polytopes $P$ and $Q$, the \emph{Minkowski sum} $P+Q$ is the polytope $$P+Q = \{x+y \ | x \in P, y \in Q\}.$$

\begin{Prop}\label{additive}
The assignment $\lambda \mapsto \Delta_\lambda$ is additive, that is, for any dominant weights $\lambda, \mu \in \Z^n$ we have: 
$$\Delta_{\lambda + \mu} = \Delta_\lambda + \Delta_\mu,$$ where the addition on the right is the Minkowski sum of polytopes.
\end{Prop}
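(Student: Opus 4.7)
The plan is to prove the two inclusions $\Delta_\lambda + \Delta_\mu \subseteq \Delta_{\lambda+\mu}$ and $\Delta_{\lambda+\mu} \subseteq \Delta_\lambda + \Delta_\mu$ separately. The first inclusion is immediate from the defining inequalities: if $x \in \Delta_\lambda$ and $y \in \Delta_\mu$, then adding the triangle inequality $a \leq c \leq b$ for $x$ to that for $y$ yields the corresponding triangle inequality for $z := x + y$, and the first-row inequalities $\lambda_j \leq x_{1j} \leq \lambda_{j+1}$ and $\mu_j \leq y_{1j} \leq \mu_{j+1}$ add to give $(\lambda+\mu)_j \leq z_{1j} \leq (\lambda+\mu)_{j+1}$.

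For the reverse inclusion, given $z \in \Delta_{\lambda+\mu}$, the plan is to construct $x \in \Delta_\lambda$ and $y \in \Delta_\mu$ with $z = x + y$ inductively on rows. For the first row, set
\[
x_{1j} := \max\{\lambda_j,\; z_{1j} - \mu_{j+1}\}, \qquad y_{1j} := z_{1j} - x_{1j}.
\]
Using $\lambda_j \leq \lambda_{j+1}$, $\mu_j \leq \mu_{j+1}$, and $\lambda_j + \mu_j \leq z_{1j} \leq \lambda_{j+1} + \mu_{j+1}$, one directly checks that $x_{1j} \in [\lambda_j, \lambda_{j+1}]$ and $y_{1j} \in [\mu_j, \mu_{j+1}]$. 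For row $i \geq 2$, having defined row $i{-}1$ of both $x$ and $y$ with $x + y = z$ in that row, set
\[
x_{ij} := \max\{x_{(i-1)j},\; z_{ij} - y_{(i-1)(j+1)}\}, \qquad y_{ij} := z_{ij} - x_{ij}.
\]

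The core of the argument is then to verify the four bounds $x_{(i-1)j} \leq x_{ij} \leq x_{(i-1)(j+1)}$ and $y_{(i-1)j} \leq y_{ij} \leq y_{(i-1)(j+1)}$ at each inductive step. The lower bound on $x_{ij}$ and the upper bound on $y_{ij}$ are immediate from the definition of $\max$. The upper bound $x_{ij} \leq x_{(i-1)(j+1)}$ follows from the GZ inequality $z_{ij} \leq z_{(i-1)(j+1)} = x_{(i-1)(j+1)} + y_{(i-1)(j+1)}$ together with the within-row monotonicity $x_{(i-1)j} \leq x_{(i-1)(j+1)}$; a symmetric argument using $z_{(i-1)j} \leq z_{ij}$ and $y_{(i-1)j} \leq y_{(i-1)(j+1)}$ gives $y_{ij} \geq y_{(i-1)j}$. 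The main obstacle is the modest bookkeeping: one must track, in parallel with the main induction, that the constructed $x$ and $y$ are non-decreasing along each row. This is itself automatic, since the chain $x_{(i-1)j} \leq x_{(i-2)(j+1)} \leq x_{(i-1)(j+1)}$ holds by GZ inequalities already verified in previous steps (with the top row $\lambda$ playing the role of the $(i-2)$-row when $i = 2$, using that $\lambda$ is weakly increasing), and likewise for $y$.
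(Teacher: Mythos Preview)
Your proof is correct and follows the same overall strategy as the paper: the easy inclusion $\Delta_\lambda + \Delta_\mu \subseteq \Delta_{\lambda+\mu}$ is immediate, and the reverse inclusion is obtained by splitting a point $z \in \Delta_{\lambda+\mu}$ row by row, using the already-constructed row $i-1$ in place of $(\lambda,\mu)$ when handling row $i$.

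The one genuine difference is the splitting formula. The paper normalizes each inequality to $0 \leq y \leq a+b$ and uses the \emph{proportional} split $y' = y\,a/(a+b)$, $y'' = y\,b/(a+b)$, whereas you use the \emph{extremal} split $x_{ij} = \max\{x_{(i-1)j},\, z_{ij} - y_{(i-1)(j+1)}\}$. Your choice has two minor advantages: it avoids the (harmless but unaddressed) division-by-zero case $a+b=0$ in the paper's formula, and it preserves integrality of the decomposition when $z,\lambda,\mu$ are integral. You are also more explicit than the paper about the within-row monotonicity $x_{(i-1)j} \leq x_{(i-1)(j+1)}$ needed for the induction to go through; the paper's proportional split also yields this (since $x'_{1i} \leq \lambda_{i+1} \leq x'_{1(i+1)}$) but the paper does not spell it out.
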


\begin{proof}
The inclusion $\Delta_\lambda + \Delta_\mu \subset \Delta_{\lambda+\mu}$ is clear. We need to show the other direction. 
Let $x \in \Delta_{\lambda+\mu}$, our goal is to write $x = x' + x''$ with $x' \in \Delta_\lambda$ and $x'' \in \Delta_\mu$. We begin with the first row $x_{1*} = (x_{11}, \ldots, x_{1(n-1)})$ satisfying 
$$\lambda_1 + \mu_1 \leq x_{11} \leq \lambda_2 + \mu_2 \leq \cdots \leq x_{1(n-1)} \leq \lambda_n + \mu_n.$$
It is clear that, for each $i$, the sum of line segments $[\lambda_i, \lambda_{i+1}]$ and $[\mu_i, \mu_{i+1}]$ is $[\lambda_i + \mu_i, \lambda_{i+1}+\mu_{i+1}]$. Thus we can find $x'_{1*}, x''_{1*} \in \R^{n-1}$ such that $x_{1*} = x'_{1*} + x''_{1*}$ and they satisfy the first row of interlacing inequalities for $\Delta_\lambda$ and $\Delta_\mu$ respectively. We can then repeat the same argument for the second row replacing $\lambda$, $\mu$ with $x'_{1*}$, $x''_{1*}$ to obtain $x'_{2*}, x''_{2*} \in \R^{n-2}$. Continuing this way we find $x' \in \Delta_\lambda$, $x'' \in \Delta_\mu$ with $x= x'+x''$ as required.  
\end{proof}

\begin{Rem}
Proposition \ref{additive} shows that the collection of Gelfand-Zetlin polytopes is an example of a \emph{linear family of polytopes} (as defined in \cite{kaveh2018notion}). In this regard, Proposition  \ref{prop-GZ-fan} is related to \cite[Proposition 1.3]{kaveh2018notion}.
\end{Rem}

\begin{Prop}  \label{prop-injective}
Suppose for two dominant weights $\lambda$, $\lambda' \in \Lambda$ and $c \in \Z^N$ we have $c + \Delta_{\lambda} = \Delta_{\lambda'}$. Then $\lambda - \lambda'$ is a multiple of $(1,\ldots,1)$, that is, $\lambda$, $\lambda'$ represent the same weight in $\Lambda'$.
\end{Prop}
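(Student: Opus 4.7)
The plan is to extract the sequence $\lambda$ directly from $\Delta_\lambda$ by projecting onto the ``top row'' coordinates $x_{11}, \ldots, x_{1,n-1}$, since these are the variables whose constraints explicitly involve the entries of $\lambda$. Looking at the GZ array \eqref{gzarray}, the only inequalities that mention \emph{only} $x_{1i}$ (and no other $x_{kj}$) are the top $n-1$ triangles, which read $\lambda_i \leq x_{1i} \leq \lambda_{i+1}$. So if $\pi:\R^N \to \R^{n-1}$ is the projection $(x_{ij}) \mapsto (x_{11},\ldots,x_{1,n-1})$, I expect $\pi(\Delta_\lambda)$ to be exactly the axis-aligned box $B_\lambda := \prod_{i=1}^{n-1}[\lambda_i,\lambda_{i+1}]$.

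To verify this, I would check that any point in $B_\lambda$ lifts to $\Delta_\lambda$: given $(x_{11},\ldots,x_{1,n-1}) \in B_\lambda$, one obtains an increasing sequence (since $x_{1i}\leq \lambda_{i+1}\leq x_{1,i+1}$), and then setting $x_{ij}=x_{1,j}$ for all $i\geq 2$ trivially satisfies every diamond inequality $x_{(i-1)j}\leq x_{ij}\leq x_{(i-1)(j+1)}$. Having identified $\pi(\Delta_\lambda)=B_\lambda$, applying $\pi$ to the hypothesis $c+\Delta_\lambda=\Delta_{\lambda'}$ gives $\pi(c)+B_\lambda = B_{\lambda'}$. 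Writing $\pi(c) = (c_{11},\ldots,c_{1,n-1})$ and comparing opposite facets of these boxes coordinate by coordinate yields, for each $i=1,\ldots,n-1$,
$$c_{1i}+\lambda_i = \lambda'_i \quad\text{and}\quad c_{1i}+\lambda_{i+1}=\lambda'_{i+1}.$$
Subtracting shows $\lambda'_i - \lambda_i = \lambda'_{i+1}-\lambda_{i+1}$ for every $i$, so the difference $\lambda'-\lambda$ is a constant vector $k(1,\ldots,1)$; integrality of $c$ (and hence of each $c_{1i}$) forces $k\in\Z$, which is exactly the claim.

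I do not anticipate a substantial obstacle. The one subtlety worth flagging is that the proposition allows merely dominant (not regular) $\lambda$, in which case some intervals $[\lambda_i,\lambda_{i+1}]$ collapse to a point and $\Delta_\lambda$ degenerates. However, the lifting argument above is insensitive to this: the formula $x_{ij}=x_{1,j}$ keeps working, so $\pi(\Delta_\lambda)$ is still the (possibly lower-dimensional) box $B_\lambda$, and the endpoint-matching step proceeds unchanged. Thus the whole argument should fit cleanly in a short paragraph once the projection trick is set up.
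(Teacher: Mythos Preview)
Your proof is correct and follows essentially the same line as the paper's: both look at the top-row coordinates $x_{1i}$, observe that their range in $\Delta_\lambda$ is the interval $[\lambda_i,\lambda_{i+1}]$, and conclude from the translation hypothesis that $\lambda'_i=\lambda_i+c_{1i}$ and $\lambda'_{i+1}=\lambda_{i+1}+c_{1i}$, whence all $c_{1i}$ agree. Your version is simply more explicit in that you phrase this as a projection $\pi$ and verify $\pi(\Delta_\lambda)=B_\lambda$ via an explicit lift, whereas the paper takes this for granted.
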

\begin{proof}
Let the $(x_{ij})$, $(x'_{ij})$ denote the coordinates of points in $\Delta_{\lambda}$, $\Delta_{\lambda'}$ respectively. Also let $c = (c_{ij})$. The assumption that $c + \Delta_{\lambda} = \Delta_{\lambda'}$ implies that for all $1 \leq i \leq n-1$, $\lambda_i \leq x_{1i} \leq \lambda_{i+1}$ if and only if $\lambda'_i \leq x_{1i} + c_{1i} \leq \lambda'_{i+1}$. It follows that $\lambda'_i = \lambda_i + c_{1i}$ and $\lambda'_{i+1} = \lambda_{i+1} + c_{1i}$ which in turn implies that $c_{1i} = c_{1(i+1)}$. This finishes the proof. 
\end{proof}

Recall that a virtual polytope is a formal difference of two polytopes. The set of virtual polytopes in $\R^N$ forms an infinite dimensional $\R$-vector space. For a fan $\Sigma$ in $\R^N$ let $\mathcal{P}(\Sigma)$ denote the subgroup of virtual lattice polytopes in $\R^N$ generated by polytopes whose normal fan is $\Sigma$. The group $\mathcal{P}(\Sigma)$ contains a copy of the additive group $\Z^N$ as the virtual lattice polytopes whose support function is linear on the whole $\R^N$.
\begin{Cor}    \label{cor-additive}
(1) The map $\lambda \mapsto \Delta_\lambda$ gives a homomorphism $\phi: \Lambda = \Lambda(\GL(n, \C)) \to \mathcal{P}(\Sigma_\GZ)$. (2) The homomorphism  $\phi$ induces an isomorphism $\bar{\phi}: \Lambda' = \Lambda(\SL(n, \C)) = \Lambda / \Z(1, \ldots, 1) \to \mathcal{P}(\Sigma_\GZ) / \Z^N$. (3) The quotient group $\Lambda'$ is isomorphic to the Picard group of the toric variety $X_\GZ$ associated to the fan $\Sigma_\GZ$.
\end{Cor}
\begin{proof}
The assertion (1) is an immediate corollary of Proposition \ref{additive}. To prove (2) note that surjectivity of $\bar{\phi}$ follows from Proposition \ref{normalfan} and the injectivity of $\bar{\phi}$ is the content of Proposition \ref{prop-injective}. Finally, (3) follows from the well-known fact that for a fan $\Sigma$, the group $\Pic(X_\Sigma)$ is isomorphic to the group $\textup{PL}(\Sigma, \Z^N)$ of integer piecewise linear functions on $\Sigma$ modulo integer linear functions. This in turn can be identified with the quotient group $\mathcal{P}(\Sigma) / \Z^N$ (see \cite[Theorem 4.2.12]{Cox-Little-Schenck}).
\end{proof}

\section{Review of degrees of line bundles on toric and flag varieties}\label{degree}
We recall that, for a projective variety $X$ of dimension $d$ embedded into a projective space $\P^s$, the \emph{degree} of $X$ is defined to be: $$\text{deg}(X) = \# (X \cap H_1 \cap \ldots \cap H_d),$$ where the $H_i$ are generic hyperplanes in $\P^s$. 
Alternatively, let $[H]$ be the class of a hyperplane in $\text{Pic}(\P^s) \cong \Z$ and let $[H']$ be the pullback of $[H]$ to $X$ via the embedding $X \hookrightarrow \P^s$, then $\text{deg}(X) = [H']^d$, the self-intersection number of the divisor class $[H']$.

If the embedding $X \hookrightarrow \P^s$ is given by the sections of a very ample line bundle $\mathcal{L}$, that is, $X \hookrightarrow \P(H^0(X,\mathcal{L})^*)$, we will write $\deg(X, \mathcal{L})$ for $\deg(X)$. The asymptotic Riemann-Roch theorem, implies that $$\text{deg}(X, \mathcal{L}) = d! \lim_{m \to \infty} \frac{\dim H^0(X,\mathcal{L}^{\otimes m})}{m^d}.$$
If $\mathcal{L}$ is not very ample, we still define $\deg(X, \mathcal{L})$ as the self-intersection number of the divisor class of $\mathcal{L}$.

In the case $X = X_\Sigma$ is the toric variety of a fan $\Sigma$, we recall that all divisors are linearly equivalent to $T$-invariant divisors which in turn are generated by codimension $1$ orbit closures $D_\rho = \overline{O}_\rho$, $\rho \in \Sigma(1)$. Thus an arbitrary $T$-invariant divisor on $X_\Sigma$ can be written in the form $D = \sum_{\rho} a_\rho D_\rho.$ The associated line bundle will be $\mathcal{L} = \mathcal{O}(D)$, and the dimension of $H^0(X,\mathcal{L})$ is equal to the number of lattice points in the polytope $P_D = \{m \ | \ \langle m, v_\rho \rangle \geq -a_\rho, \, \forall \rho \in \Sigma(1) \}$ where $v_\rho$ is the primitive vector along the ray $\rho$.
One can also start with a lattice polytope $P$ normal to the fan of $X_\Sigma$. The \emph{support numbers} $\{ a_\rho \}_{\rho \in \Sigma(1)}$ of the polytope enable us to define a $T$-invariant divisor $D_P = \sum_{\rho \in \Sigma(1)} a_\rho D_\rho$ on $X_\Sigma$, and $P_{D_{P}} = P$. One shows that $D$ is ample that is, $kD$ defines an embedding into projective space for sufficiently large $k \in \N$. We have the following (which is a version of the well-known Bernstein-Kushnirenko-Khovanskii theorem):
\begin{Prop}\label{degreeToric}
Let $\mathcal{L}_P$ be the line bundle associated to the divisor $D_P$. Then: $$\deg(X_\Sigma, \mathcal{L}_P) = d! \Vol_d(P).$$
\end{Prop}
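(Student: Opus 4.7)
The plan is to combine the asymptotic Riemann-Roch formula recalled immediately before the proposition with the standard toric description of sections of $\mathcal{L}_P$ as characters indexed by lattice points of $P$, and then pass to the large-$m$ limit via the elementary Riemann-sum computation of $\textup{Vol}_d(P)$.

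First, I would establish the identity $\dim H^0(X_\Sigma, \mathcal{L}_P^{\otimes m}) = \#(mP \cap \Z^d)$ for every $m \geq 1$. The key ingredient is the well-known torus-eigenspace decomposition
$$H^0(X_\Sigma, \mathcal{O}(D)) \;=\; \bigoplus_{u \in P_D \cap \Z^d} \C \cdot \chi^u$$
for any $T$-invariant divisor $D = \sum_\rho a_\rho D_\rho$, which follows from checking on each affine toric chart that a character $\chi^u$ extends to a global section of $\mathcal{O}(D)$ exactly when $\langle u, v_\rho \rangle \geq -a_\rho$ for every $\rho \in \Sigma(1)$, i.e.\ exactly when $u \in P_D$ (see e.g.\ Cox-Little-Schenck, Chapter 4). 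Applied to $D = mD_P$, whose support polytope is $mP$, this gives the desired formula for the dimension of sections. Next, I would invoke the standard Riemann-sum statement
$$\lim_{m \to \infty} \frac{\#(mP \cap \Z^d)}{m^d} \;=\; \textup{Vol}_d(P),$$
valid for any lattice polytope $P$: each lattice point of $mP$ contributes a cube of side $1/m$, and the boundary error is $O(m^{d-1})$, which is negligible after dividing by $m^d$.

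Combining these two ingredients with the asymptotic Riemann-Roch formula
$$\deg(X_\Sigma, \mathcal{L}_P) \;=\; d! \lim_{m \to \infty} \frac{\dim H^0(X_\Sigma, \mathcal{L}_P^{\otimes m})}{m^d}$$
yields $\deg(X_\Sigma, \mathcal{L}_P) = d! \, \textup{Vol}_d(P)$ immediately. The ampleness of $\mathcal{L}_P$, noted in the paragraph preceding the proposition, is what justifies using this form of Riemann-Roch; alternatively, one can replace $P$ by $kP$ for $k$ large enough that $\mathcal{L}_{kP}$ is very ample, apply the very ample case, and then use the homogeneity $\deg(X_\Sigma, \mathcal{L}_P^{\otimes k}) = k^d \deg(X_\Sigma, \mathcal{L}_P)$ together with $\textup{Vol}_d(kP) = k^d \textup{Vol}_d(P)$ to descend to $\mathcal{L}_P$ itself. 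I expect the most technical step to be the character decomposition of sections, but this is a standard black-box result in toric geometry; once it is granted, everything else is a direct limit computation.
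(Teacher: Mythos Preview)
Your proof is correct and follows exactly the same approach as the paper: asymptotic Riemann--Roch, then $\dim H^0(X_\Sigma,\mathcal{L}_P^{\otimes m}) = \#(mP \cap \Z^d)$, then the Riemann-sum limit. The paper's version is simply a one-line chain of equalities without the supporting justifications you supply.
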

\begin{proof} By the asymptotic Riemann-Roch we have:
$$\text{deg}(X_\Sigma, \mathcal{L}_P) = d! \lim_{m \to \infty} \frac{\dim H^0(X_\Sigma,\mathcal{L}_P^{\otimes m})}{m^d}= d! \lim_{m \to \infty} \frac{\#(mP \cap \Z^d)}{m^d} = d! \Vol_d(P).$$
\end{proof}

{As we are interested in comparing $X_{\GZ}$ with the flag variety $\FL_n$, we also recall some facts about degrees of embeddings for $\FL_n$. Recall that to a weight $\lambda$ one associates a line bundle $\mathcal{L}_\lambda$ on $\FL_n$. This line bundle satisfies the property $$\mathcal{L}_\lambda^{\otimes m} = \mathcal{L}_{m\lambda}.$$ 
}
Similarly to the proof of Proposition \ref{degreeToric}, we can show the following (see for example \cite[Remark 2.4]{kaveh2011note}).
\begin{Prop}\label{flagDegree}
For any dominant weight $\lambda$ we have:
$$\deg(\FL_n,\mathcal{L}_\lambda) = N! \Vol_N(\Delta_\lambda),$$ where $N = n(n-1)/2 = \dim(\FL_n)$.
\end{Prop}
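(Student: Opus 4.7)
The plan is to run the same asymptotic Riemann--Roch argument as in the proof of Proposition \ref{degreeToric}, but with the Gelfand--Zetlin basis playing the role that lattice point counts played for the toric variety. Since $\lambda$ is regular dominant, $\mathcal{L}_\lambda$ is very ample on $\FL_n$, so
\[
\deg(\FL_n, \mathcal{L}_\lambda) \;=\; N! \lim_{m \to \infty} \frac{\dim H^0(\FL_n, \mathcal{L}_\lambda^{\otimes m})}{m^N}.
\]

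The next step is to rewrite the numerator. Using $\mathcal{L}_\lambda^{\otimes m} = \mathcal{L}_{m\lambda}$ together with the Borel--Weil theorem gives $H^0(\FL_n, \mathcal{L}_\lambda^{\otimes m}) \cong V_{m\lambda}^*$, hence $\dim H^0(\FL_n, \mathcal{L}_\lambda^{\otimes m}) = \dim V_{m\lambda}$. The Gelfand--Zetlin construction then identifies this dimension with the number of lattice points in the GZ polytope: $\dim V_{m\lambda} = \#(\Delta_{m\lambda} \cap \Z^N)$.

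Now I would use additivity (Proposition \ref{additive}) to replace $\Delta_{m\lambda}$ with a dilation. Applying Proposition \ref{additive} inductively yields $\Delta_{m\lambda} = m \Delta_\lambda$ (this can also be read off directly from the inequalities \eqref{gzarray}, since the upper and lower bounds on each $x_{ij}$ scale by $m$ when $\lambda$ is scaled by $m$). Thus
\[
\dim H^0(\FL_n, \mathcal{L}_\lambda^{\otimes m}) \;=\; \#(m\Delta_\lambda \cap \Z^N).
\]

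Substituting into the Riemann--Roch expression and invoking the classical Ehrhart-type fact
\[
\lim_{m \to \infty} \frac{\#(mP \cap \Z^N)}{m^N} \;=\; \textup{Vol}_N(P)
\]
for any full-dimensional lattice polytope $P \subset \R^N$ (applied to $P = \Delta_\lambda$, which is full-dimensional precisely because $\lambda$ is regular) completes the proof. There is no real obstacle here; the only thing to double-check is that $\Delta_{m\lambda}$ is genuinely the $m$-fold dilation of $\Delta_\lambda$ rather than merely Minkowski-congruent to it, and that $\Delta_\lambda$ is $N$-dimensional for regular $\lambda$, so that the volume limit is nonzero and the asymptotic matches the self-intersection number.
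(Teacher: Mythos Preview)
Your argument is correct and is exactly the one the paper intends: it only says ``an argument similar to the proof of Proposition \ref{degreeToric}'' and refers to \cite{kaveh2011note}, having already recorded the ingredients you use (Borel--Weil, $\mathcal{L}_\lambda^{\otimes m} = \mathcal{L}_{m\lambda}$, and $\dim V_\lambda = \#(\Delta_\lambda \cap \Z^N)$). Your fleshing-out of the asymptotic Riemann--Roch limit, including the observation $\Delta_{m\lambda} = m\Delta_\lambda$, matches the intended proof precisely.
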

\begin{proof}
By the construction of the Gelfand-Zetlin polytope \cite{Gelfand:1950ihs}, for every dominant $\lambda$, we have $\# (\Delta_\lambda \cap \Z^N) = \dim(V_\lambda) = \dim(V_\lambda^*)$. On the other hand, by the Borel-Weil theorem, one knows that 
$H^0(\FL_n, \mathcal{L}_\lambda) \cong V_\lambda^*$. We note that for any $m>0$, $\mathcal{L}_\lambda^{\otimes m} = \mathcal{L}_{m\lambda}$ and $m\Delta_\lambda = \Delta_{m\lambda}$. Then the asymptotic Riemann-Roch theorem gives us:
$$\text{deg}(\FL_n, \mathcal{L}_\lambda) = N! \lim_{m \to \infty} \frac{\dim H^0(\FL_n,\mathcal{L}_\lambda^{\otimes m})}{m^N}= N! \lim_{m \to \infty} \frac{\#(m\Delta_\lambda \cap \Z^N)}{m^N} = N! \Vol_N(\Delta_\lambda),$$ as required.   
\end{proof}

Proposition \ref{flagDegree} and Proposition \ref{degreeToric} show that the map $\Pic(\FL_n) \to \Pic(X_\GZ)$, given by $\mathcal{L}_\lambda \mapsto \mathcal{L}_{\Delta_\lambda}$, preserves degree of line bundles. This observation is important in the proof of our main theorem (Theorem \ref{th-main}).


\section{Review of intersection theory on toric and flag varieties}
In this section we recall some basic facts about Chow rings and Chow cohomology rings of toric and flag varieties.


For an algebraic variety $X$ and $1 \leq k \leq n = \dim(X)$, the $k$-th Chow group $A_k(X)$ is the group generated by algebraic $k$-cycles on $X$, that is, formal sums of irreducible $k$-dimensional subvarieties in $X$, modulo rational equivalence. Two $k$-cycles are equivalent if their difference is the divisor of a rational function on a $(k+1)$-dimensional subvariety, and the rational equivalence is the  equivalence relation generated by this. The total Chow group of $X$ is $A_*(X)=\bigoplus_{k=0}^n A_k(X)$. When $X$ is smooth we let $A^k(X) = A_{n-k}(X)$ and $A^*(X) = \bigoplus_{k=0}^n A^k(X)$. 
In this case, the transverse intersection of subvarieties gives a well-defined multiplication on $A^*(X)$ making it into a graded algebra called the Chow ring of $X$ (\cite[Proposition 8.3]{fulton2013intersection}). More generally, for a commutative ring $R$, one can define the Chow groups $A_k(X, R)$ and the Chow ring $A^*(X, R)$ whenever $X$ is smooth.

In general, for a smooth variety $X$, the cohomology ring $H^*(X)$ and the Chow ring $A^*(X)$ are different. Nevertheless, for some nice varieties $X$ these algebras are naturally isomorphic ({\cite[Example 19.1.11]{fulton2013intersection}}).

\begin{Th}  \label{cellcohomology}
Suppose $X$ is smooth and has a paving by affine cells, then $H^*(X)$ and $A^*(X)$ are naturally isomorphic. 
\end{Th}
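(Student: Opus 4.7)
The plan is to construct the cycle class map $\text{cl}: A_*(X) \to H_*^{BM}(X)$ (from Chow groups to Borel--Moore homology) sending a subvariety $V \subset X$ to the fundamental class $[V]^{BM}$, and to show it is an isomorphism by induction on the affine paving $\emptyset = X_0 \subset X_1 \subset \cdots \subset X_m = X$ in which each locally closed stratum $U_i := X_i \setminus X_{i-1}$ is a disjoint union of affine spaces of some dimension $d_i$. The base case $X=\mathbb{A}^d$ is the standard fact that $A_*(\mathbb{A}^d) = \mathbb{Z}[\mathbb{A}^d]$ sits in degree $d$ and $H_*^{BM}(\mathbb{A}^d) = \mathbb{Z}$ sits in degree $2d$, with the cycle class map sending generator to generator.

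For the inductive step I would line up the two localization sequences associated to the open-closed decomposition $X_{i-1} \hookrightarrow X_i \hookleftarrow U_i$: the right exact sequence
\begin{equation*}
A_*(X_{i-1}) \to A_*(X_i) \to A_*(U_i) \to 0
\end{equation*}
for Chow groups, and the long exact sequence
\begin{equation*}
\cdots \to H_k^{BM}(X_{i-1}) \to H_k^{BM}(X_i) \to H_k^{BM}(U_i) \to H_{k-1}^{BM}(X_{i-1}) \to \cdots
\end{equation*}
for Borel--Moore homology. A parallel inductive claim, proved together with the main one, is that $H_\ast^{BM}(X_i)$ vanishes in all odd degrees. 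Since $H_\ast^{BM}(U_i)$ is concentrated in the even degree $2d_i$, every connecting map lands in an odd-degree Borel--Moore group and hence vanishes, so the long exact sequence splits into short exact sequences matching the Chow sequence. The cycle class map is compatible with proper pushforward and flat pullback, so it intertwines the two sequences, and the five lemma (together with the inductive hypothesis for $X_{i-1}$ and the base case for $U_i$) promotes the isomorphism to $X_i$.

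Once $\text{cl}: A_*(X) \to H_*^{BM}(X)$ is shown to be an isomorphism doubling degrees, the statement for cohomology and operational Chow follows by dualizing. When $X$ is smooth and projective (as in our main case $X = \FL_n$), Poincar\'e duality gives $H^{2k}(X) \cong H_{2(n-k)}^{BM}(X)$, and by definition $A^k(X) = A_{n-k}(X)$, so $\text{cl}$ descends to an isomorphism $A^*(X) \to H^*(X)$. For a general $X$ with affine paving, one invokes the Kronecker-duality map from operational Chow to Chow homology, which is an isomorphism precisely because all strata of the paving are affine, and then transports along $\text{cl}$ to identify operational Chow with singular cohomology. Naturality in $X$ comes for free from the naturality of the cycle class map under pullback by flat morphisms and pushforward along proper ones.

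The main technical obstacle is verifying that the cycle class map commutes with the boundary operators in the two exact sequences; this is the compatibility of algebraic and topological pushforward/pullback along closed and open immersions, and is exactly where one leans on the full framework of \cite{fulton2013intersection}. The vanishing of odd Borel--Moore homology along the paving is what makes this compatibility bite, converting a partial comparison statement into a complete isomorphism.
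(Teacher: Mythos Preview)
The paper does not actually prove this theorem: it is stated with a citation to \cite{fulton2013intersection}, Example 19.1.11, and then used as a black box. So there is no ``paper's own proof'' to compare against; your proposal is a reconstruction of the classical argument that Fulton outlines in that example.

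Your reconstruction is essentially correct and is the standard route: cycle class map to Borel--Moore homology, induction along the filtration, localization sequences on both sides, and vanishing of odd Borel--Moore homology to kill the connecting maps. One small point worth tightening: the Chow localization sequence is only right exact, so you cannot literally invoke the five lemma on two short exact sequences. What actually happens is that injectivity of $A_*(X_{i-1}) \to A_*(X_i)$ is deduced \emph{from} the diagram --- the Borel--Moore map is injective (by the odd-vanishing argument) and the outer cycle class maps are isomorphisms by induction, forcing the Chow map to be injective too. Once that is in hand, both rows are short exact and the five lemma (or a direct chase) finishes. Your final paragraph on passing from $A_*$ and $H_*^{BM}$ to $A^*$ and $H^*$ is fine for smooth $X$ (which covers the applications in this paper to $\FL_n$ and smooth toric varieties); the general singular case via operational Chow is more delicate than your one-line gesture suggests, but it is not needed here.
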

The above theorem in particular applies to complete smooth toric varieties and the flag variety $\FL_n$.

When $X=X_\Sigma$ is a smooth complete toric variety, there is a nice description of the Chow ring $A^*(X_\Sigma)$. In this case, for each $k$, the Chow group $A^k(X_\Sigma) = A_{n-k}(X_\Sigma)$ is generated by the orbit closures of codimension $k$. Although not needed in this paper, we state the following well-known result on description of the Chow ring of a smooth complete toric variety (see \cite[Section 5.2]{fultonbook}). 
\begin{Th}   \label{th-coh-smooth-toric-var}
Let $X_\Sigma$ be a smooth complete toric variety. Let $D_1, \ldots, D_r$ be the codimnesion $1$ orbit closures corresponding to rays $\rho_1, \ldots, \rho_r \in \Sigma(1)$.
Then $A^*(X_\Sigma) \cong H^*(X_\Sigma) \cong \Z[D_1,\ldots,D_r]/I$ where $I$ is the ideal generated by the following relations:
\begin{itemize}
    \item[(1)] $D_{i_1} \cdots D_{i_k}$ for all $\rho_{i_1},\ldots,\rho_{i_k}$ not contained in any cone of $\Sigma$ and,
    \item[(2)] $\sum_{i=1}^r \langle u, v_{\rho_i} \rangle D_i$ for all $u \in M$.
\end{itemize}
\end{Th}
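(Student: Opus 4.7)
The plan is to identify $A^*(X_\Sigma)$ with the presented ring by exhibiting a natural surjection $\Z[D_1,\ldots,D_r]\twoheadrightarrow A^*(X_\Sigma)$, checking that the ideal $I$ lies in its kernel, and then arguing that these are the only relations. Throughout I will use that $X_\Sigma$ is smooth and complete, and (via Theorem \ref{cellcohomology} applied to the affine paving by $T$-orbit cells) that $A^*(X_\Sigma)\cong H^*(X_\Sigma)$, which tells us the Betti numbers and lets us count ranks.

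First I would set up the surjection. For each cone $\sigma\in\Sigma$ of dimension $k$, let $V(\sigma)$ be the corresponding codimension-$k$ orbit closure. The affine paving by orbits $O_\sigma$ makes the classes $[V(\sigma)]$ a $\Z$-basis of $A^*(X_\Sigma)$, indexed by cones. Because $X_\Sigma$ is smooth, every maximal cone is generated by a subset of the primitive ray generators forming part of a basis of the lattice, and a cone $\sigma=\mathrm{cone}(v_{i_1},\ldots,v_{i_k})$ satisfies $V(\sigma)=D_{i_1}\cap\cdots\cap D_{i_k}$ as a transverse intersection, hence $[V(\sigma)]=D_{i_1}\cdots D_{i_k}$ in $A^*(X_\Sigma)$. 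Sending $D_i$ to the class of the divisor $D_{\rho_i}$ and extending multiplicatively gives a graded ring homomorphism $\Phi\colon\Z[D_1,\ldots,D_r]\to A^*(X_\Sigma)$ which is surjective, since the monomials $D_{i_1}\cdots D_{i_k}$ span all the basis classes $[V(\sigma)]$.

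Next I would verify both families of relations lie in $\ker\Phi$. For relations of type (1), if $v_{i_1},\ldots,v_{i_k}$ do not lie in a common cone of $\Sigma$ then the divisors $D_{i_1},\ldots,D_{i_k}$ have empty intersection on $X_\Sigma$ (one sees this by intersecting with each affine chart $U_\sigma$), so the product class vanishes. For relations of type (2), each character $u\in M$ defines a nowhere-zero rational function $\chi^u$ on the open torus $T\subset X_\Sigma$, and a standard local computation on the toric charts yields $\mathrm{div}(\chi^u)=\sum_i\langle u,v_i\rangle D_i$; rational equivalence then gives $\sum_i\langle u,v_i\rangle D_i=0$ in $A^1(X_\Sigma)$, and multiplying by arbitrary elements kills the whole ideal they generate.

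The main remaining task, and the genuine obstacle, is injectivity modulo $I$. The cleanest route is a rank comparison: by the affine paving, $\mathrm{rank}\,A^k(X_\Sigma)$ equals the number of codimension-$k$ cones of $\Sigma$, equivalently the $k$-th entry of the $h$-vector $h(\Sigma)$ (via the standard formula computing Betti numbers of a smooth projective toric variety from the $f$-vector). On the other side, one shows by a Stanley--Reisner style argument that in $\Z[D_1,\ldots,D_r]/I$ the degree $k$ piece is spanned by squarefree monomials $D_{i_1}\cdots D_{i_k}$ ranging over cones of $\Sigma$: relations (1) kill every monomial whose support is not a cone, and relations (2) allow one to rewrite a repeated factor $D_i^2$, after using a character $u$ dual to $v_i$ relative to an adjacent cone, as a combination of squarefree monomials supported on cones of smaller dimension. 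Counting these squarefree monomials matches the rank of $A^k(X_\Sigma)$, so the surjection $\Phi$ descends to an isomorphism $\Z[D_1,\ldots,D_r]/I\xrightarrow{\sim}A^*(X_\Sigma)$. The isomorphism with $H^*(X_\Sigma)$ then follows from Theorem \ref{cellcohomology}.
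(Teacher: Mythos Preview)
The paper does not supply its own proof of this theorem; it simply records it as a well-known result with a reference to \cite[Section 5.2]{fultonbook}. So there is no in-paper argument to compare against, and your outline follows essentially the standard strategy one finds in Fulton: build the surjection, check the two families of relations, and finish by a dimension count.

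That said, your injectivity step has a genuine gap. You assert that ``the affine paving by orbits $O_\sigma$ makes the classes $[V(\sigma)]$ a $\Z$-basis of $A^*(X_\Sigma)$,'' but the orbits $O_\sigma$ are algebraic tori $(\C^*)^{n-\dim\sigma}$, not affine spaces, so they do not constitute an affine paving; the orbit-closure classes $[V(\sigma)]$ generate $A^*(X_\Sigma)$ but are not linearly independent (already on $\P^2$ the three boundary divisors are linearly equivalent). The actual affine paving comes from a Bialynicki--Birula or shelling argument indexed by maximal cones, and the resulting Betti numbers are the $h$-vector entries $h_k(\Sigma)$, not the face numbers $f_k(\Sigma)$. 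Your rank count on the algebraic side is off for the same reason: the squarefree monomials supported on $k$-dimensional cones do span the degree-$k$ piece of $\Z[D_1,\ldots,D_r]/I$, but there are $f_k$ of them, so ``counting these squarefree monomials'' does not match $\mathrm{rank}\,A^k = h_k$. What closes the argument is the further fact that $\Z[D_1,\ldots,D_r]/I$ is the Stanley--Reisner ring of the simplicial sphere underlying $\Sigma$ modulo the linear system of parameters coming from relations (2), and the Hilbert function of that quotient is precisely the $h$-vector; once you invoke this, the rank comparison becomes legitimate and the surjection is forced to be an isomorphism.
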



There is also a nice description of the ring $A^*(\FL_n) \cong H^*(\FL_n)$ due to Borel. For each weight $\lambda$ let $c_1(\mathcal{L}_\lambda)$ be the divisor class (Chern class) of the line bundle $\mathcal{L}_\lambda$ on $\FL_n$ (see \cite{brion2005lectures}, in particular Remark 1.4.2 in there). 
\begin{Th}   \label{th-Chow-ring-flag-var}
We have the following:
\begin{itemize}
\item[(1)] The map $\lambda \mapsto c_1(\mathcal{L}_\lambda)$ gives an isomorphism of $A^1(\FL_n) = \Pic(\FL_n)$ with the weight lattice $\Lambda' = \Lambda(\SL(n, \C)) = \Lambda / \Z(1, \ldots, 1)$. 
\item[(2)] $A^*(\FL_n)$ is generated, as an algebra, by $c_1(\mathcal{L}_\lambda)$, $\lambda \in \Lambda$.
\item[(3)] $A^*(\FL_n) \cong \textup{Sym}(\Lambda') / I_W$ where $I_W$ is the ideal generated by non-constant $W$-invariants.
\end{itemize}
\end{Th}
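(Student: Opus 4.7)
The plan is to establish the three parts in sequence, using tautological bundles on $\FL_n$ together with Chevalley's theorem on coinvariant algebras of finite reflection groups.

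For part (1), I would exploit the identification $\FL_n = \GL(n,\C)/B = \SL(n,\C)/B'$, where $B' = B \cap \SL(n,\C)$. Since $\SL(n,\C)$ is simply connected and semisimple, the classical fact $\textup{Pic}(\SL(n,\C)/B') \cong X^*(T')$ (with isomorphism $\mu \mapsto \mathcal{L}_\mu$) applies, where $T'$ is the diagonal torus in $\SL(n,\C)$. Restriction of characters from the diagonal torus $T \subset \GL(n,\C)$ to $T'$ gives the quotient $\Lambda = X^*(T) \twoheadrightarrow X^*(T') = \Lambda'$ with kernel $\Z(1,\ldots,1)$ (the characters that extend to $\GL(n,\C)$, namely powers of $\det$). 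Hence $\lambda \mapsto c_1(\mathcal{L}_\lambda)$ factors as an isomorphism $\Lambda' \xrightarrow{\sim} A^1(\FL_n)$.

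For part (2), I would use the iterated projective-bundle structure of $\FL_n$: the forgetful maps exhibit $\FL_n$ as a tower of projective bundles over a point, each step projectivizing a tautological vector bundle. The projective bundle formula then implies that $A^*(\FL_n)$ is generated by the first Chern classes of the tautological line bundles at each stage, and these are precisely classes of the form $c_1(\mathcal{L}_\lambda)$.

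For part (3), define the algebra homomorphism $\Phi: \textup{Sym}(\Lambda) \to A^*(\FL_n)$ by $\lambda \mapsto c_1(\mathcal{L}_\lambda)$. By (1), $\Phi$ descends to a map on $\textup{Sym}(\Lambda')$, which by (2) is surjective. To show $I_W \subseteq \ker\Phi$, consider the universal filtration $0 \subsetneq \mathcal{F}_1 \subsetneq \cdots \subsetneq \mathcal{F}_n = \C^n \otimes \mathcal{O}_{\FL_n}$ on $\FL_n$: the line bundles $\mathcal{F}_i / \mathcal{F}_{i-1}$ have first Chern classes corresponding to the standard basis vectors $\varepsilon_i \in \Lambda$, and the triviality of $\mathcal{F}_n$ forces $\prod_{i=1}^n (1 + c_1(\mathcal{F}_i/\mathcal{F}_{i-1})) = 1$. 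This is equivalent to every nonconstant elementary symmetric polynomial in the $\varepsilon_i$ lying in $\ker\Phi$, hence every nonconstant $W$-invariant does too. The resulting surjection $\textup{Sym}(\Lambda')/I_W \twoheadrightarrow A^*(\FL_n)$ is an isomorphism by a rank count: Chevalley's theorem gives $\dim_\Q \textup{Sym}(\Lambda'_\Q)/I_W = |W| = n!$, while $A^*(\FL_n)$ is free of rank $n!$ from the affine paving by Schubert cells.

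The main obstacle is this rank comparison, which rests on Chevalley's structural theorem for coinvariant algebras of finite reflection groups. The other ingredients reduce to routine tautological constructions together with the Bruhat decomposition.
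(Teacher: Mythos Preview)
The paper does not prove this theorem: it is recorded as the classical Borel description, attributed to Borel, and then the discussion moves on to the polytope-algebra viewpoint from \cite{kaveh2011note}. So there is no ``paper's own proof'' to compare against; the authors simply quote the result as background.

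That said, your outline is a correct and standard route to the Borel presentation. Part (1) is fine: passing to $\SL(n,\C)/B'$ and using $\textup{Pic}(G/B)\cong X^*(T)$ for $G$ simply connected is exactly the right move, and the kernel of the restriction $\Lambda\to\Lambda'$ is $\Z(1,\ldots,1)$. Part (2) via the tower of projective bundles and the projective bundle formula is the usual way to see that $A^*(\FL_n)$ is generated by the tautological first Chern classes. For part (3), your argument that the total Chern class of the trivial bundle $\mathcal{F}_n$ forces the elementary symmetric polynomials in the $c_1(\mathcal{F}_i/\mathcal{F}_{i-1})$ to vanish is correct, and since the elementary symmetrics generate all symmetric polynomials this gives $I_W\subset\ker\Phi$; the rank comparison via Chevalley and the Schubert cell count then finishes it. One small point worth making precise in a full write-up: be explicit about which sign convention you are using for $\mathcal{L}_\lambda$ so that $c_1(\mathcal{F}_i/\mathcal{F}_{i-1})$ really does correspond to the standard basis vector $\varepsilon_i$ under your map $\Phi$; different sources differ here, and the identification in (1) should be compatible with the one you use in (3).
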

In the proof of our main theorem (Theorem \ref{th-main}) we will need parts (1) and (2) in Theorem \ref{th-Chow-ring-flag-var}.

\begin{Rem}
Alternatively, $H^*(\FL_n, \Q)$ can be viewed as the polytope algebra of the Gelfand-Zetlin family (see \cite[Corollary 5.3]{kaveh2011note}). There it is shown that $$H^*(\FL_n, \Q) \cong \text{Sym}(\Lambda_\Q)/I$$ where $I$ is the ideal of polynomials which, when viewed as differential operators, annihilate the volume polynomial of the Gelfand-Zetlin polytopes. This description of the Chow ring of the flag variety is it is closely related to the proof of Theorem \ref{th-main} but is not directly used there. 
\end{Rem}


We note that the toric variety $X_\GZ$ is not smooth except when $n=1, 2$ and hence we need a more general notion of the Chow ring that applies to non-smooth varieties as well. For a (not necessarily smooth) variety $X$ in \cite{fulton1981categorical} Fulton and MacPherson construct a variant of the Chow ring called the {\it operational Chow ring} or {\it Chow cohomology ring} $A^*(X) = \bigoplus_{k=0}^n A^k(X)$. When $X$ is smooth it coincides with the usual Chow ring. When $X = X_\Sigma$ is a complete toric variety one has $A^k(X_\Sigma) = \textup{Hom}(A_k(X_\Sigma), \Z)$. Moreover, the ring $A^*(X_\Sigma)$ can be described purely in terms of combinatorial data of Minkowski weights, which are certain integer valued functions on the fan $\Sigma$. In Section \ref{sec-n=3} we will use this combinatorial description for some computations in the Chow cohomology of the Gelfand-Zetlin toric variety for $n=3$. Section \ref{sec-MW} reviews the Minkowski weights description of the Chow cohomology ring.  

\section{Some algebra lemmas}
Let $A = \bigoplus_{i=0}^n A^i$ be a graded ring over a field $\k$ which is finite dimensional as a $\k$-vector space and $A^0 \cong A^n \cong \k$. Following \cite{huh2017lefschetz}, we call the graded subalgebra of $A$ generated by $A^1$, the \emph{Lefschetz subalgebra} of $A$. We recall that $A$ has \emph{Poincar\'e duality} if the multiplication maps $$A^i \times A^{n-i} \to A^n \cong \k$$ are non-degenerate for all $i$. Our goal is to compare $A^*(\FL_n) \cong H^*(\FL_n)$, which has Poincar\'e duality, with the algebra $A^*(X_{\GZ})$, which in general does not. We start by observing how to get a Poincar\'e duality algebra from a general graded algebra.

\begin{Lem}\label{pd-lemma}  
Let $A = \bigoplus_{i=0}^n A^i$ with $A^0 \cong A^n \cong \k$. There exists a homogeneous ideal $I \subset A$ such that $A/I$ has Poincar\'e duality and is the smallest homogeneous ideal (with respect to inclusion) with this property. 
\end{Lem}
\begin{proof}
Consider the ideal $I$ generated by all the homogeneous elements $x \in A$ such that $$x \cdot A^{n-\deg(x)} = 0.$$ 
It is straightforward to check that $I$ has the required properties.
\end{proof}
We call the algebra $A/I$ in Lemma \ref{pd-lemma}, the \emph{Poincar\'e duality quotient} $\text{PD}(A)$ of $A$.
We next recall a useful algebra fact 
(see \cite[Theorem 1.1]{kaveh2011note} and \cite[Exercise 21.7]{eisenbud}) which we will need later. It states that a Poincar\'e duality algebra is determined by its top power polynomial. 
\begin{Th}\label{algebraKaveh}
Let $A = \bigoplus_{i=0}^n A^i$ be a finite dimensional graded algebra over a field $\k$ which is generated by $A^1$, satisfies $A^0 \cong \k \cong A^n$, and has Poincar\'e duality.
Fix a basis $\{a_1, \ldots, a_r\}$ for $A^1$, and consider the polynomial $P: \k^r \to \k$ defined by $$P(x_1,\ldots,x_r) = (x_1 a_1 + \cdots + x_r a_r)^n \in A^n \cong \k.$$ 
Then we have an isomorphism of graded algebras $$A \cong \k[\partial_1, \ldots, \partial_r]/I$$ where $\partial_i = \frac{\partial}{\partial x_i}$, and $I$ is the ideal of polynomials in the operators $\partial_1, \ldots, \partial_r$ which annihilate $P$. The isomorphism sends each $a_i$ to the image of $\partial_i$ in $\k[\partial_1, \ldots, \partial_r] / I$. 
\end{Th}

A generalization of Theorem 8.1 for commutative algebras $A$ with Poincar\'e duality that are not necessarily generated by $A^0 \cong \k$ and $A^1$ can be found in \cite{EKKh}.

We now use Theorem \ref{algebraKaveh} to to prove the following key lemma required in the proof of our main result (Theorem \ref{th-main}).
\begin{Lem}\label{algLemma}
Suppose $A = \bigoplus_{i=0}^n A^i$ and $B = \bigoplus_{i=0}^n B^i$ are $\k$-algebras which are finite dimensional $\k$-vector spaces and have the following properties:
\begin{itemize}
\item[(1)] $A^0 \cong A^n \cong B^0 \cong B^n \cong \k$. 
\item[(2)] $A$ and $B$ are generated in degree one. 
\item [(3)] $A$ has Poincar\'e duality. 
\item[(4)] There exists a linear isomorphism $\varphi: A^1 \to B^1$ such that for all $a_1, \ldots, a_n \in A^1$ we have: $$a_1 \cdots a_n = \varphi(a_1) \cdots \varphi(a_n)$$ using fixed isomorphisms $A^n \cong \k \cong B^n$. 
\end{itemize}
Then $\varphi$ extends to give a $\k$-algebra isomorphism $\tilde \varphi$ between $A$ and the Poincar\'e duality quotient of $B$.
\end{Lem}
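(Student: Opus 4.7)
The plan is to apply Lemma \ref{algebraKaveh} twice, once to $A$ and once to $\Gor(B)$, and observe that both algebras receive the same presentation because their top-degree product polynomials coincide by hypothesis (4). First I fix a basis $a_1,\ldots,a_r$ of $A^1$, set $b_i = \varphi(a_i) \in B^1$ (a basis of $B^1$), and note that under the fixed identifications $A^n \cong k \cong B^n$ the polynomial
\[
P(x_1,\ldots,x_r) = (x_1 a_1 + \cdots + x_r a_r)^n \in k
\]
agrees with $(x_1 b_1 + \cdots + x_r b_r)^n$. Let $I \subset k[\partial_1,\ldots,\partial_r]$ denote the ideal of operators annihilating $P$. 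Lemma \ref{algebraKaveh} applied to $A$ then gives $A \cong k[\partial_1,\ldots,\partial_r]/I$ via $\partial_i \mapsto a_i$.

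Next I will check that $\Gor(B)$ satisfies the hypotheses of Lemma \ref{algebraKaveh}. Most are immediate: $\Gor(B)$ is finite dimensional, graded, has bottom piece $k$, has top piece $\Gor(B)^n \cong B^n \cong k$ (since $K^n = 0$ by the proof of Lemma \ref{gorenstein}, where $K$ denotes the defining ideal of the Gorenstein quotient), is generated in degree one, and has Poincar\'e duality by definition. The genuine step, which I expect to be the main technical obstacle, is to show that the images $\bar b_1,\ldots,\bar b_r$ remain linearly independent in $\Gor(B)^1 = B^1/K^1$, equivalently $K^1 = 0$. I plan to argue it as follows. If $b = \sum c_i b_i \in K \cap B^1$ then $b \cdot B^{n-1} = 0$, and since $B$ is generated in degree one this forces $b \cdot b_{i_1} \cdots b_{i_{n-1}} = 0$ for every multi-index. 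Running the degree-$n$ half of the proof of Lemma \ref{algebraKaveh} inside $B$ (which nowhere uses Poincar\'e duality) translates this into $\partial_{i_1} \cdots \partial_{i_{n-1}} \ell(\partial) \cdot P = 0$ for every index tuple, where $\ell(x) = \sum c_i x_i$. But $\ell(\partial) \cdot P$ is a homogeneous polynomial of degree $n-1$ annihilated by every monomial of degree $n-1$ in the $\partial_i$, so in characteristic zero it must vanish. Hence $\ell \in I$, and feeding this back into Lemma \ref{algebraKaveh} for $A$ yields $\sum c_i a_i = 0$ in $A$; linear independence of the $a_i$ in $A^1$ then forces all $c_i = 0$, so $b = 0$.

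With $K^1 = 0$ in hand, the top product polynomial of $\Gor(B)$ in the basis $\bar b_1,\ldots,\bar b_r$ is again $P$, since the natural map $B^n \to \Gor(B)^n$ is an isomorphism. A second application of Lemma \ref{algebraKaveh}, this time to $\Gor(B)$, produces $\Gor(B) \cong k[\partial_1,\ldots,\partial_r]/I$ via $\partial_i \mapsto \bar b_i$. Composing with the first isomorphism yields a graded $k$-algebra isomorphism $\tilde\varphi: A \to \Gor(B)$ with $\tilde\varphi(a_i) = \bar b_i$; on $A^1$ this is $\varphi$ followed by the canonical identification $B^1 \xrightarrow{\sim} \Gor(B)^1$, so $\tilde\varphi$ extends $\varphi$ as required. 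In short, the whole argument reduces to the vanishing $K^1 = 0$; after that, everything is bookkeeping with the fact that the two presentations see the same polynomial $P$ and hence the same ideal $I$.
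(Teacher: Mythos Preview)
Your proposal is correct and follows essentially the same strategy as the paper: apply Lemma \ref{algebraKaveh} to $A$ and to $\Gor(B)$, after verifying that the quotient map $B^1 \to \Gor(B)^1$ is an isomorphism. The only noteworthy difference is in how you establish $K^1 = 0$. The paper argues directly from hypothesis (4): if $b = \varphi(a)$ satisfies $b \cdot B^{n-1} = 0$, then for every monomial $b_{i_1}\cdots b_{i_{n-1}}$ one has $0 = b \cdot b_{i_1}\cdots b_{i_{n-1}} = a \cdot a_{i_1}\cdots a_{i_{n-1}}$ by (4), whence $a \cdot A^{n-1} = 0$, contradicting Poincar\'e duality of $A$ unless $a = 0$. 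Your route via the polynomial $P$ and the annihilator ideal $I$ is equivalent but slightly longer; it also makes the characteristic-zero dependence explicit, which in the paper is implicit in the $n!$ factors inside the proof of Lemma \ref{algebraKaveh}.
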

\begin{proof}
We apply Theorem \ref{algebraKaveh} to $A$ and to the Poincar\'e duality quotient $\text{PD}(B)$. Since $A$ already satisfies the conditions of Theorem \ref{algebraKaveh} we know that $A \cong \k[\partial_1, \ldots, \partial_r]/I$ where $r = \dim_\k(A^1)$ and $I$ is the annihiliator of the top power polynomial $P$ described in Theorem \ref{algebraKaveh}. 
We need to show that $\text{PD}(B)$ also satisfies these conditions. First note that $B^0 \cong \k \cong B^n$ so the multiplication $B^0 \times B^n \to B^n \cong \k$ is already non-degenerate and thus the ideal $I$ in Lemma \ref{pd-lemma} contains neither $B^0$ nor $B^n$. This gives us $\text{PD}(B)^0 \cong \k \cong \text{PD}(B)^n.$ Also, by construction $\text{PD}(B)$ has Poincar\'e duality. Finally, $\text{PD}(B)$ is generated in degree one since $B$ is generated in degree $1$. Now consider the map on degree one pieces: $$A^1 \overset{\varphi}{\to} B^1 \overset{q}{\to} \text{PD}(B)^1,$$ where $q$ is the quotient map. It suffices to show $\tilde \varphi := q \circ \varphi: A^1 \to \text{PD}(B)^1$ is an isomorphism. Since $\varphi$ is an isomorphism and $q$ is surjective, $\tilde \varphi$ is surjective and we only need to verify injectivity. Suppose for contradiction that some nonzero $a \in A^1$ has image $\tilde \varphi(a) = q ( \varphi(a)) = 0$. Then $b = \varphi(a)$ is in the ideal in Lemma \ref{pd-lemma}, so it is a linear combination of the $x_i$ satisfying $x_i \cdot B^{n-\deg(x_i)} = 0.$ Since $b \in B^1$, the $x_i$ must be in degree $0$ or $1$. One knows that $B^0 \cap I = \{0\}$, so we can only have $x_i \in B^1$. It follows that $b \cdot B^{n-1} = 0$. But the assumption (4) then implies that $a \cdot A^{n-1} = 0$ which contradicts that $A$ has Poincar\'e duality. Thus $\text{PD}(B)$ satisfies the conditions required for Theorem \ref{algebraKaveh}, and hence $\text{PD}(B) \cong \k[\partial_1,\ldots,\partial_r]/I$. We have already seen that $A$ is isomorphic to this quotient algebra and thus $A \cong \text{PD}(B)$.
\end{proof}


\section{Main theorem}
We now state and prove our main theorem relating the cohomology ring of the flag variety $\FL_n$ and the Chow cohomology ring of the toric variety $X_{\GZ}$ associated to the GZ fan $\Sigma=\Sigma_{\GZ}$.
\begin{Th}   \label{th-main}
The cohomology ring $H^*(\FL_n, \Q) \cong A^*(\FL_n, \Q)$ is isomorphic to the Poincar\'e duality quotient of the Lefschetz subalgebra of $A^*(X_{\GZ}, \Q)$. For each dominant weight $\lambda$, the isomorphism sends the divisor class of the line bundle $\mathcal{L}_\lambda$ on $\FL_n$ to the image of the cohomology class in $X_\GZ$ associated to the GZ polytope $\Delta_\lambda$.
\end{Th}
\begin{proof}
We claim that there is an isomorphism of groups $A^1(\FL_n) \cong A^1(X_{\GZ})$. One knows that $A^1(\FL_n) = A_{N-1}(\FL_n) = \Pic(\FL_n) \cong \Lambda(\SL(n, \C)) = \Lambda(\GL(n, \C)) / \Z(1, \ldots, 1)$.
Also for a complete toric variety $X_\Sigma$, where $\Sigma$ is a complete fan in $\R^N$, the Chow cohomology group $A^1(X_\Sigma)$ is naturally isomorphic to $\Pic(X_\Sigma)$ (see \cite[Corollary 3.4]{fulton1997intersection}). Now the claim follows from Corollary \ref{cor-additive}.



One knows that for an $N$-dimensional toric variety $X_\Sigma$, under the isomorphism $A^1(X_\Sigma) \cong \Pic(X_\Sigma)$ the top product of an element in $A^1(X_\Sigma) \cong \Pic(X_\Sigma)$ coincides with the self-intersection number of the corresponding divisor in $\Pic(X_\Sigma)$. Applying this to the Gelfand-Zetlin toric variety $X_\GZ$, from Propositions \ref{degreeToric} and \ref{flagDegree}, we now conclude that the isomorphism $\Pic(\FL_n) = \Pic(X_\GZ)$ respects the multiplication, i.e., it satisfies the assumption (4) in Lemma \ref{algLemma} (alternatively this can be deduced from \cite[Theorem 4.3 and Corollary 4.5]{Jensen-Yu}). Applying Lemma \ref{algLemma} to $A = A^*(\FL_n)$ and $B =$ the Lefschetz subalgebra of $A^*(X_\GZ)$ finishes the proof. 

\end{proof}

\section{Minkowski weights}   \label{sec-MW}
In this section we recall the description of the Chow cohomology ring of a toric variety in terms of Minkowski weights (see \cite{fulton1997intersection}, see also \cite{Kazarnovskii}). We will use it in Section \ref{sec-n=3} to compute the Gelfand-Zetlin Chow cohomology ring for $n=3$. 
Let $\Sigma$ be a complete fan in $N$. Recall that $\Sigma(k)$ is the set of cones of dimension $k$ in $\Sigma$.
\begin{Def}
A function $c: \Sigma(n-k) \to \Z$ is a \emph{Minkowski weight} of codimension $k$ on $\Sigma$ if it satisfies the \emph{balancing condition} for all $\tau \in \Sigma(n-k-1)$: 
\begin{equation}\label{balancing}
\sum_{\sigma \in \Sigma(n-k), \sigma \supset \tau} \langle u, n_{\sigma,\tau} \rangle c(\sigma) = 0, \quad \forall u \in M(\tau) := M \cap \tau^\perp.
\end{equation}
Here $n_{\sigma,\tau}$ is a lattice point in $\sigma$ which generates the rank $1$ lattice $N_\sigma/N_\tau$, the quotient of the lattices spanned by $\sigma \cap N$ and $\tau \cap N$ respectively. 
\end{Def}

Let $MW^k$ denote the set of all Minkowski weights of codimension $k$. For two Minkowski weights $c \in MW^p$ and $\tilde c \in MW^q$, the product $c \cup \tilde c \in MW^{p+q}$ is defined by: $$(c \cup \tilde c)(\gamma) = \sum_{(\sigma,\tau) \in \Sigma(n-p) \times \Sigma(n-q)} m_{\sigma,\tau}^\gamma c(\sigma) \tilde c(\tau), \quad \forall \gamma \in \Sigma(n-p-q),$$ where $m_{\sigma,\tau}^\gamma = [N: N_\sigma + N_\tau]$, and the sum is over all pairs of cones $(\sigma,\tau)$ which both contain $\gamma$ and $\sigma$ meets $\tau + v$ for fixed generic vector $v$ (see \cite[Theorem 4.2]{fulton1997intersection}).

In \cite{fulton1997intersection} an isomorphism between the ring of Minkowski weights and the operational Chow ring of a complete toric variety $X_\Sigma$ is given. In fact it is shown that $MW^k \cong A^k(X_\Sigma)$ (see \cite[Theorem 3.1]{fulton1997intersection}). In particular:
\begin{equation}\label{pictoric}
\text{Pic}(X_\Sigma) \cong A^1(X_\Sigma).
\end{equation}


\begin{Ex}[Hypersimplex]
The following is an example of a fan where the ring $MW^*$ is not generated by $MW^1$ (see \cite[Example 3.5]{fulton1997intersection} or \cite[Example 4.2]{katz2008}). Consider the fan $\Sigma_H$ over the cube in $\R^3$ with vertices $(\pm 1, \pm 1, \pm 1)$. The rays in the fan $\Sigma_H$ are:
\begin{align*}
    \rho_1 = \langle 1, 1, 1 \rangle \quad & \quad \rho_5 = -\rho_1\\
    \rho_2 = \langle 1, 1, -1 \rangle \quad & \quad \rho_6 = -\rho_2\\
    \rho_3 = \langle 1, -1, 1 \rangle \quad & \quad \rho_7 = -\rho_3\\
    \rho_4 = \langle -1, 1, 1 \rangle \quad & \quad \rho_8 = -\rho_4
\end{align*}
One computes that $MW^1 \cong \Z$ and $MW^2 \cong \Z^5$. Thus $MW^*$ is not generated by $MW^1$. 

\end{Ex}

\section{Gelfand-Zetlin example, $n=3$}   \label{sec-n=3}
In this section we compute the Chow cohomology ring of $X_{\GZ}$ for $n=3$ using the Minkowski weights and show that while it is generated in degree $1$, it does not have Poincar\'e duality. We consider the GZ polytope of the weight $\lambda = (-1,0,1)$ for ease of computation. The polytope $\Delta_\lambda$ is defined by the following array of inequalities 
$$\begin{array}{ccccc}
    -1 && 0 && 1 \\
    &x && y & \\
    && z &&
\end{array}$$
and has normal fan $\Sigma_{\GZ}$ as in Figure \ref{fig:gzfan}. We enumerate the rays as follows:
$$\begin{array}{ccc}
    \rho_1 = (1,0,0) & \rho_3 = (0,1,0) & \rho_5 = (1,0,-1)\\
    \rho_2 = (-1,0,0) & \rho_4 = (0,-1,0) & \rho_6 = (0,-1,1).
\end{array}$$

\begin{figure}[h]
    \centering
    \includegraphics[scale=.4]{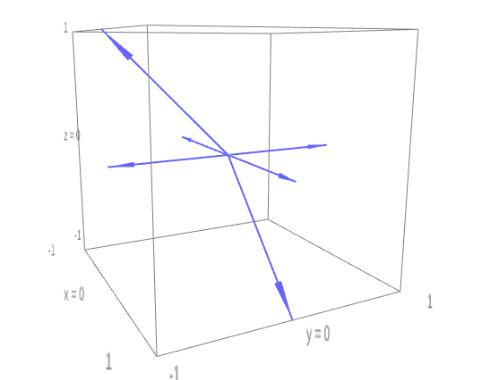}
    \caption{Rays of $\Sigma_{\GZ}$ for $n=3$.}
    \label{fig:gzfan}
\end{figure}

Likewise, we let $\sigma_{ij}$ denote the 2-dimensional cone spanned by rays $\rho_i$ and $\rho_j$: 
$$\begin{array}{cccc}
\sigma_{13} & \sigma_{23} & \sigma_{24} &\\
\sigma_{15} & \sigma_{25} & \sigma_{35} & \sigma_{45}\\
\sigma_{16} & \sigma_{26} & \sigma_{36} & \sigma_{46}
\end{array}$$
Similarly, the collection of 3-dimensional cones are:
$$\begin{array}{cccc}
\gamma_{135} & \gamma_{235} & \gamma_{245} & \gamma_{1456}\\
\gamma_{136} & \gamma_{236} & \gamma_{246} &
\end{array}$$

We now compute $MW^k$, $k=0, \ldots, 3$. A Minkowski weight in $MW^3$ is any map $\{0\} \to \Z$ and hence $MW^3 \cong \Z$. A Minkowski weight $c \in MW^2$ is a function on rays $\rho_i$. Let $c(\rho_i) = c_i$, then the single relation coming from the cone $\tau = 0$ is given by $\sum_{i=1}^6 c_i v_{\rho_i} = 0$.
From this we get the three relations:
\begin{align*}
    c_1 - c_2 + c_5 &= 0\\
    c_3 - c_4 - c_6 &= 0\\
    -c_5 + c_6 &= 0.
\end{align*}
We see from this that any weight $c \in MW^2$ is determined by its values on three rays $c(\rho_2) = a$, $c(\rho_4) = b$ and $c(\rho_6)=c$. Thus $MW^2 \cong \Z^3$.

Next take $c \in MW^1$. It is a function on codimension $1$ cones $\sigma_{ij}$. Let $c(\sigma_{ij}) = c_{ij}$. 
The relations among the $c_{ij}$ come from the rays. The relation for $\tau = \rho_1$ involves the cones $\sigma_{13}, \sigma_{15}$ and $\sigma_{16}$. Let $n_{\sigma \tau}$ be the lattice point in $\sigma$ which generates the one-dimensional lattice $N_\sigma/N_\tau$. 
We compute: $$n_{13} = (0,1,0), \quad n_{15} = (0,0,-1), \quad \mbox{ and } \quad n_{16} = (0, -1, 1)$$ where all vectors are considered modulo $\rho_1 = (1, 0, 0)$. The balancing condition then becomes $$c_{13} (0,1,0) + c_{15} (0,0,-1) + c_{16} (0, -1, 1) = (0, 0, 0)$$ which implies $c_{13} = c_{15} = c_{16}$.
Similar computations for the other rays yield the following results:
\begin{align*}
    c_{13} &= c_{15} = c_{16} = c_{25} = c_{26}\\
    c_{24} &= c_{35} = c_{36} = c_{45} = c_{46}\\
    c_{23} &= c_{13} + c_{24}
\end{align*}
For later computations, we let:
\begin{align*}
    a &= c_{13} = c_{15} = c_{16} = c_{25} = c_{26}\\
    b &= c_{24} = c_{35} = c_{36} = c_{45} = c_{46}\\
    c_{23} &= a+b.
\end{align*}

Finally, a weight $c \in MW^0$ is a function on top-dimensional cones subject to relations coming from $2$-dimensional cones. Each $2$-dimensional cone $\sigma_{ij}$ separates two top-dimensional cones, and the corresponding relation gives equality between the values of $c$ on each pair of top-dimensional cones. Hence $MW^0 \cong \Z$ as the value of $c$ on each $3$-dimensional cone must be the same. In summary, we have the following:
\begin{align*}
    MW^0 & \cong \Z\\
    MW^1 & \cong \Z^2\\
    MW^2 & \cong \Z^3\\
    MW^3 & \cong \Z.
\end{align*}
Before understanding the product structure on $MW^*$, it is already clear that this ring does not have Poincar\'e duality as the rank of $MW^2$ is greater than that of $MW^1$.

Recall from Section \ref{sec-MW} that for weights $c \in MW^p$, $\tilde c \in MW^q$, their product is a function on cones of codimension $p+q$, and its value on a cone $\gamma \in \Sigma(3-p-q)$ is given by
\begin{equation}\label{product}
(c \cup \tilde c)(\gamma) = \sum_{(\sigma, \tau)} m_{\sigma \tau}^
\gamma c(\sigma) \tilde c(\tau),
\end{equation}
where the sum is over certain pairs $(\sigma, \tau) \in \Sigma(3-p) \times \Sigma(3-q)$ and $m_{\sigma \tau}^\gamma = [N: N_\sigma + N_\tau]$.
We compute products of Minkowski weights in our example to determine whether $MW^*(X_{\GZ})$ is generated in degree $1$. Let $c, \tilde c \in MW^1(X_{\GZ})$ with:
\begin{align*}
    c: \{\sigma_{13}, \sigma_{15}, \sigma_{16}, \sigma_{25}, \sigma_{26}\} &\mapsto a\\
    c: \{\sigma_{24}, \sigma_{35}, \sigma_{36}, \sigma_{45}, \sigma_{46}\} &\mapsto b\\ 
    c: \{\sigma_{23}\} &\mapsto a+b\\
    \tilde c: \{\sigma_{13}, \sigma_{15}, \sigma_{16}, \sigma_{25}, \sigma_{26}\} &\mapsto \tilde a\\
    \tilde c: \{\sigma_{24}, \sigma_{35}, \sigma_{36}, \sigma_{45}, \sigma_{46}\} &\mapsto \tilde b\\ 
    \tilde c: \{\sigma_{23}\} &\mapsto \tilde a+\tilde b.
\end{align*}
The Minkowski weight $c \cup \tilde c \in MW^2$ is evaluated on rays and from the arguments above it is enough to determine the value of this weight on the rays $\rho_2$, $\rho_4$ and $\rho_5$. 
Moreover, in Equation \eqref{product} for $(c \cup \tilde c)(\rho_2)$ the sum is over all pairs $(\sigma,\tau) \in \Sigma(2)\times \Sigma(2)$ where $\sigma$ and $\tau$ both contain $\rho_2$ and $\sigma$ meets $\tau + v$ for a generic fixed $v \in N$. The cones in $\Sigma(2)$ which contain $\rho_2$ are $\{\sigma_{23},\sigma_{24},\sigma_{25},\sigma_{26}\}$, so $\sigma, \tau$ will come from this collection. Since all these cones involve $\rho_2 = (-1,0,0)$, we can sketch the relevant cones in the $yz$-plane where for example $\sigma_{23}$ can be viewed as $\rho_3 = (1,0)$. 
\begin{figure}[h]
    \centering
    \includegraphics[scale=.4]{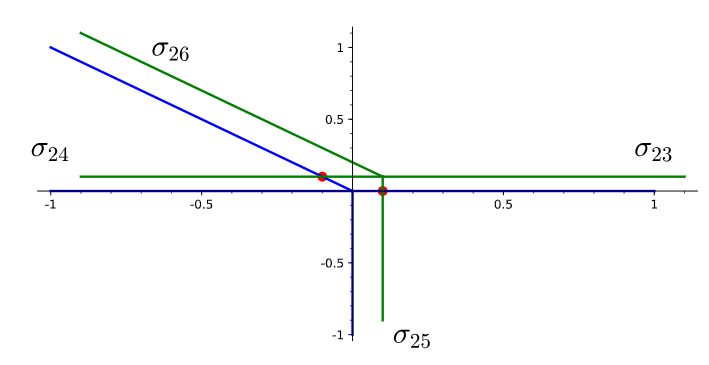}
    \caption{Intersection of
    $\sigma$ and $\tau + v$}
    \label{fig:intersection}
\end{figure}
In Figure \ref{fig:intersection}, we see the cones for $c$ in blue, and for $\tilde c$ in green using a shift of $v = (0.1,0.1,0.1).$ Then there are two pairs $(\sigma,\tau)$ which meet for this vector $v$, either $(\sigma,\tau) = (\sigma_{23},\sigma_{25})$ or $(\sigma,\tau) = (\sigma_{26},\sigma_{24})$. The last ingredient required to compute this product are the coefficients $m_{\sigma \tau}^{\rho_2}$ for the sum. 
In both cases, $N_\sigma+N_\tau = N$ so $m_{\sigma \tau}^{\rho_2}=1$. Thus we have
\begin{align*}
    (c \cup \tilde c)(\rho_2) &= c(\sigma_{23})\tilde c(\sigma_{25}) + c(\sigma_{26}) \tilde c (\sigma_{24})\\
    &= (a+b) \tilde a + a(\tilde b)\\
    &= a \tilde a + b \tilde a + a \tilde b.
\end{align*}

Similar computations for $(c \cup \tilde c)(\rho_4)$ and $(c \cup \tilde c)(\rho_5)$ yield:
\begin{align*}
    (c \cup \tilde c)(\rho_4) &= b \tilde b\\
    (c \cup \tilde c)(\rho_5) &= b \tilde a + a \tilde b.
\end{align*}
Thus we see that the products $c \cup \tilde c$ in fact generate the entire $3$-dimensional space $MW^2,$ and hence $MW^*$ for $\Sigma_{\GZ}$ is generated in degree $1$ for the case $n=3$.

Finally, the second author has written a Sage code which shows that for $n=4$, $5$, the ring $MW^*$ of $\Sigma_\GZ$ is not generated in degree $1$, and moreover its Lefschetz subalgebra does not have Poincar\'e duality. It can be found at {\sf https://github.com/evillella/minkowski}. Also see the appendix in \cite{EliseThesis}.

\bibliography{thisbib}
\bibliographystyle{alpha}

\end{document}